\newtheorem{theorem}{\sc Theorem}
\newtheorem{lemma}{\sc Lemma}
 \newcommand{\cE}{\mathcal{E}}
 \newcommand{\cL}{\mathcal{L}}
 \newcommand{\bC}{\mathbb{C}}
 \newcommand{\bR}{\mathbb{R}}
 \newcommand{\bN}{\mathbb{N}}
 \newcommand{\bZ}{\mathbb{Z}}
 \newcommand{\erf}{\mathrm{erf}}
\begin{document}
\sloppy
\openup1\jot
\addtolength{\textheight}{2.5cm}

\title{
The Exact Distribution of the Sample Variance from Bounded Continuous Random Variables}

\author{
T. Royen
}
\affiliation{
Fachhochschule Bingen, University of applied sciences,\\
 Berlinstrasse 109, D--55411 Bingen, Germany\\
 E--mail: royen@fh-bingen.de
}


\abstract{For a sample of absolutely bounded i.i.d. random variables with a continuous density  the cumulative distribution function of the sample variance is represented by a univariate integral over a Fourier--series. If the density is a polynomial or a trigonometrical polynomial the coefficients of this series are simple finite terms containing only the error function, the exponential function and powers. In more general cases --- e.g. for all beta densities --- the coefficients are given by some series expansions. The method is generalized to positive semi--definite quadratic forms of bounded independent, but not necessarily identically distributed random variables if the form matrix differs from a diagonal matrix $D > 0$ only by a matrix of rank 1.
} 

\keywords {
Exact distribution of sample variance from non--normal random variables, Exact  distribution of quadratic forms 

 \noindent\textsl{AMS 2000 subject classifications:}  62E15, 62H10}
\date{11.9.2008}
\maketitle

\section*{\large 1. \ Introduction }
\label{section:1}
There is a lot of literature on the approximation of distributions of sample statistics for small samples. With the increasing technical availability of transcendental functions it becomes easier to compute exact representations  for many distributions. For the investigation of the accuracy of any proposed approximations to a distribution it would also be helpful to have an exact representation and not to rely only on Monte Carlo methods.

In particular, for the exact distribution of the sample variance $s^2$ from non--normal random variables only a few scattered results seem to exist. E.g. Mudholkar and Trivedi (1981) recommend transformations of the Wilson--Hilferty type to approximate the distribution of $s^2$. Former work is cited in this paper. In Royen (2007a), (2007b) several different representations are found for the cumulative distribution function (cdf) of $s^2$ from a gamma parent distribution. The first paper also contains an orthogonal series for the cdf of Greenwood's statistic (the square sum of "spacings") as a by--product. In the second paper a representation of the cdf of $s^2$ from a uniform distribution is given, which is a univariate integral over a Fourier series. The method for this special case is generalized in the underlying paper.

Let be
\begin{equation}
\label{eq:1}
Q =Q_n =(n-1)s^2 = \sum^{n}_{i=1} (X_i - \overline{X})^2,
\end{equation}
where $X_1,\ldots,X_n$ are bounded i.i.d. random variables with a continuous density $f$ satisfying some mild regularity conditions. Without loss of generality the interval $(0,1)$ (or $(-1,1)$) can be chosen as the support of $f$. By geometrical considerations on the $n$--unit cube it can be shown that
\begin{equation}
\label{eq:2}
q:= \sup Q_n = \Bigg\{ \begin{matrix} n/4 & , \ n \ \mathrm{even}\\
                                (n^2-1)/(4n) & , \ n \ \mathrm{odd} \end{matrix} \Bigg\}.
\end{equation}

An estimation for the order of magnitude of the cdf $F_Q(x) $ of $Q$ for small $x$ in the following section suggests for the density $f_Q$ of $Q$
\begin{equation}
\label{eq:3}
	\lim_{x\to 0} f_Q(x) = \lim_{x\to q} f_Q(x) = 0,
\end{equation}
already for small values $n$ if $f(x)$ does not increase "too rapidly" for $x$ tending to 0 or to 1. In this case a Fourier sine expansion for $f_Q$ would be suitable and the formulas in the theorems in section~2 and 3 aim  at this case. Nevertheless, the condition (3) is not necessary for the validity of the representations in these theorems. Besides, they can be modified easily to apply to general Fourier series.

Theorem 1 in Royen (2007b) provides a general formula for the Laplace transform (LT) of a quadratic form $X'AX$ of  $X'=(X_1,\ldots,X_n)$ with an "$m$--factorial" matrix \mbox{$A = D \mp CC'$,} where $D>0$ is diagonal and $C$ is an $ n \times m$--matrix of rank $m$. $Q$ is a one--factorial quadratic form with the identity matrix $D$ and $C' = n^{-1/2}(1,\ldots,1)$. Since $Q$ is bounded its LT is holomorphic and its characteristic function (cf) $\widehat{f}_Q$ is obtained from the LT by the substitution $t \to -it$. With
\begin{equation}
\label{eq:4}
\psi(t,z):= \cE \exp \left(- (z+\sqrt{-it}\  X)^2\right), \ t\in \bR, \ z \in \bC,	
\end{equation}
it follows from the above formula that
\begin{equation}
\label{eq:5}
	\widehat{f}_Q(t) = \sqrt{n/\pi} \int^\infty_{-\infty} \left(\psi(t,y)\right)^n dy.
\end{equation}
This identity is also directly obtained when $\exp (w^2)$ is replaced by the integral 
\[
\pi^{-1/2} \int^\infty_{-\infty} \exp(-y^2-2wy)dy = \exp(w^2)
\]
with $w = (-it/n)^{1/2} \sum^{n}_{j=1} X_j$ in the cf of $Q = \sum^{n}_{j=1} X^2_j - (\sum^{n}_{j=1} X_j)^2/n$ after the substitution $y \to y\sqrt n$.

The Fourier sine coefficients of $f_Q$ are given by
\begin{equation}
\label{eq:6}
(2/q)\Im m(\widehat{f}_Q(t_k)), \ t_k = k\pi/q = 	\Bigg\{ \begin{matrix} 4k\pi/n & , \ n \ \mathrm{even}\\
                                4kn\pi/(n^2-1) & , \ n \ \mathrm{odd} \end{matrix} \Bigg\}.
\end{equation}
However, the formula (5) is numerically useless since
\[
\max_{y\in \bR} \left| \exp \left( - (y + x \sqrt{t/2} - ix \sqrt{t/2}\ )^2\right)\right| = \exp(tx^2/2)
\]
increases very rapidly for large $t$. Generally, integration over $\exp(-z^2)$ should be avoided in the "critical area" defined by $|\Re e(z)| < |\Im m(z)|$. This is accomplished in (5) e.g. by changing the path of integration with $t =t_k$ to 
\begin{equation}
\label{eq:7}	
\gamma_k := \left\{ y - \sqrt{-it_k}\ \big| \  y< 0\right\} \cup \left\{ u\sqrt{-it_k}\ \big| -1 \le	 u \leq 0\right\} \cup \{ y |y>0\},
\end{equation}
where $\gamma_k$ has been identified with the set of its points for simplicity. For a density $f$ on $(-1,1)$ $\gamma_k$ is defined by
\[
\{y + \mathrm{sgn}(y) \sqrt{-it_k}\  \big| \ y \in \bR \backslash \{0\}\} \cup \{ u \sqrt{-it_k} \ \big| \ -1 \leq u \leq 1\}
\]
 It should be noted that 
\begin{equation}
\label{eq:8}
	\psi^* (t,u) := \psi(t,u\sqrt{-it}) = \cE \exp (it(X+u)^2)
\end{equation}
is the cf of $(X +u)^2$.

If the density $f_Q$ is square integrable $(f_Q \in \cL^2)$ then an absolutely and uniformly convergent series for the cdf of $Q$ follows from (5) and (6) with the changed paths $\gamma_k$ by integration over $x$:
\begin{equation}
\label{eq:9}
	F_Q(x) = P\{Q\leq x\} = (2/\pi)\sum^\infty_{k=1} \Im m(\widehat{f}_Q(t_k))(1-\cos(t_kx))/k, \ 0 \leq x\leq q,
\end{equation}
where the terms free of $x$ sum up to $1 - \cE(Q)/q$, and
\begin{equation}
\label{eq:10}
\widehat{f}_Q(t_k) = \sqrt{n/\pi} \int_{\gamma_k} (\psi(t_k,z))^n dz.	
\end{equation}
The more elegant formula
\begin{equation}
\label{eq:11}
	\widehat{f}_Q(t_k) = \sqrt{n/\pi} \sqrt{-it_k} \int_{-\infty}^\infty (\psi^*(t_k,u))^n du
\end{equation}
will also be proved in section 2, but formula (10) is numerically more attractive because of the rapid decrease of $\exp(-y^2)$, $y \in \bR$.

Finally, a univariate integral representation for $F_Q$ arises if the change of summation and integration in (9) can be justified. Then the integrand is a Fourier series with simpler coefficients, and the higher partial sums of this series exhibited a very smooth appearance in all the plotted examples. The proof of this final representation for $F_Q$ is simple if the parent densities belong to the class
\begin{equation}
\label{eq:12}
	BVC[a,b] := \{f:x \in [a,b] \to \bR, \ f\in C[a,b], \ f' \in \cL^1(a,b)\}.
\end{equation}
Thus, $f \in BVC[0,1]$ means $f$ is continuous and consequently bounded on $[0,1]$ and of bounded variation. Densities from this class will be treated in section 2. If the density $f$ of $X$ is a polynomial or a trigonometrical polynomial then $\psi(t,z)$ is representable by simple finite terms, containing only the error function $\erf(z)$, $ |\Re e(z)| \geq |\Im m(z)|$, the exponential function and powers. For more general densities some different series representations of $\psi$ will be given. Section 2 also contains a generalization to the distribution of a one--factorial positive semi--definite quadratic form $Q$ of bounded independent, but not necessarily identically distributed, continuous random variables. To include also unbounded densities, e.g. the whole family of beta densities, a broader class will be investigated in section 3.

Formulas from the handbook of mathematical functions by M.~Abramowitz and I.~Stegun are cited by (A.S.) and their number. The symbol $\cE$ stands for "expectation".\\

\section*{\large 2. \ The distribution of $Q$ for bounded parent densities with an absolutely \linebreak \hspace*{8mm} integrable derivative }
\label{section:2}

\begin{theorem}
\label{theorem:1}
Let $X$ be a random variable with a density $f\in BVC [0,1]$ as defined in (12). The cdf $F_Q$ of $Q = \sum^n_{i=1} (X_i - \overline{X})^2$ from a corresponding random sample is given for all $n\geq 2$ by
\begin{equation}
\label{eq:13}
F_Q(x) =2\pi^{-3/2}	\sqrt n \ \int^\infty_{-\infty} \sum^\infty_{k=1} \Im m\left( \sqrt{-it_k}\left(\psi^*(t_k,u)\right)^n\right) (1-\cos(t_kx))/k \ du
\end{equation}
with the cf $\psi^*(t,u)$ of $(X+u)^2$ and $t_k = k\pi/q$, $q = \sup Q$ from (2), or by

\begin{eqnarray}
	\label{eq:14}
F_Q(x) &=& 2\pi^{-3/2}	\sqrt n \ \int^1_0 \sum^\infty_{k=1} \Im m\left( \sqrt{-it_k}\left(\psi^*(t_k,-u)\right)^n\right) (1-\cos(t_kx))/k \ du\nonumber\\[-2ex]
&&\\[-1ex]
&&\hspace*{-15mm} + 2\pi^{-3/2}	\sqrt n \ \int^\infty_0 \sum^\infty_{k=1} \Im m\left( \left(\psi(t_k,y)\right)^n + \left(\psi\left(t_k,-y-\sqrt{-it_k}\right)\right)^n\right) (1-\cos(t_kx))/k \ dy\nonumber
\end{eqnarray}
with $\psi(t,y) = \cE \exp\left( - \left( y + \sqrt{-it}\ X\right)^2\right)$.
\end{theorem}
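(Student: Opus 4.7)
My strategy is to start from the Fourier series (9) for $F_Q$, substitute for $\widehat{f}_Q(t_k)$ either formula (11) to obtain (13) or formula (10) to obtain (14), and then interchange summation and integration. The $BVC$ hypothesis is what makes both the absolute convergence of (9) and the interchange possible, via a single integration by parts in the oscillatory defining integrals of $\psi$ and $\psi^*$. First I would establish the decay rate of $\widehat{f}_Q(t_k)$. Since $\psi^*(t_k,u)=\int_0^1 e^{it_k(x+u)^2}f(x)\,dx$ is an oscillatory integral with $f'\in\cL^1$, one integration by parts together with a stationary-phase argument at $x=-u$ yields $|\psi^*(t_k,u)|=O(k^{-1/2})$ uniformly on compact $u$-intervals (with faster decay when $-u\notin[0,1]$). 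Via (11), proved next, this gives $|\widehat{f}_Q(t_k)|=O(k^{(1-n)/2})$, hence $\sum_k|\widehat{f}_Q(t_k)|^2<\infty$ and by Parseval $f_Q\in\cL^2[0,q]$, so (9) is valid and absolutely convergent for every $n\ge 2$.

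Next I would prove (11). Because $X$ is bounded, $\psi(t_k,\cdot)$ is entire. The path $\gamma_k$ of (7) lies, except for its finite diagonal segment, in the half-planes $|\Re z|\ge|\Im z|$, where $\Re\bigl((z+\sqrt{-it_k}\,X)^2\bigr)\ge 0$ for $X\in[0,1]$, so $|(\psi(t_k,z))^n|$ has Gaussian decay along $\gamma_k$. A Cauchy-theorem deformation of $\gamma_k$ onto the ray $\Gamma_k=\{u\sqrt{-it_k}:u\in\bR\}$, with vanishing closing arcs at infinity, converts (10) into (11): on $\Gamma_k$ one has $dz=\sqrt{-it_k}\,du$ and $\psi(t_k,u\sqrt{-it_k})=\psi^*(t_k,u)$ by (8).

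To derive (13), substitute (11) into (9) and apply Fubini. A dominating function is produced by combining the first-step bound $|\psi^*(t_k,u)|^n=O(k^{-n/2})$ on compact $u$-intervals with the bound $|\psi^*(t_k,u)|=O(|u|^{-1})$ for $|u|$ large, obtained by an integration by parts that exploits the large phase derivative $2t_k(x+u)$. The resulting majorant is both summable in $k$ and integrable in $u$. To derive (14), substitute (10) instead, and decompose $\gamma_k$ into its three pieces from (7): the substitution $y\mapsto -y$ on $\{y-\sqrt{-it_k}:y<0\}$, the substitution $u\mapsto -u$ together with (8) on the diagonal segment $\{u\sqrt{-it_k}:-1\le u\le 0\}$, and the identity parametrization on $\{y:y>0\}$ produce exactly the three terms of (14). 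Interchanging sum and integral is trivial on the compact diagonal segment and, on the two real-axis rays, uses the Gaussian bound $|(\psi(t_k,z))^n|\le C\exp(-ny^2)$ in tandem with the $k$-decay established in the first step.

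The main obstacle is the dominated-convergence bookkeeping in the derivation of (13), and its analogue for (14): one must combine two quite different decay mechanisms---stationary-phase decay in $k$, provided by $f\in BVC$, and Gaussian or algebraic decay in the outer integration variable---over \emph{unbounded} ranges of $u$ and $y$. The design of the contour $\gamma_k$ in (7) is precisely what makes the second decay explicit; once the integration-by-parts estimates for $\psi$ and $\psi^*$ are in hand, the construction of an integrable, summable majorant is routine.
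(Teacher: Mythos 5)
Your strategy coincides with the paper's: integration-by-parts bounds on $\psi$ and $\psi^*$ (the content of the paper's Lemma~1, estimates (17)--(19)), the resulting decay $|\widehat{f}_Q(t_k)|=O(t_k^{-(n-1)/2})$, the validity of the series (9), the contour identity (11), and a dominated interchange of sum and integral, with (14) coming from the three pieces of $\gamma_k$ exactly as you describe. However, two of your steps fail precisely at the borderline case $n=2$, which the theorem includes. First, you justify (9) by ``$\sum_k|\widehat{f}_Q(t_k)|^2<\infty$ and by Parseval $f_Q\in\cL^2$ \dots\ for every $n\ge2$.'' With the correct rate $|\widehat{f}_Q(t_k)|=O(k^{-1/2})$ for $n=2$ that sum diverges, and $f_Q\notin\cL^2$ in general for $n=2$ (for the uniform density (16) gives $f_Q(x)\simeq\sqrt2\,x^{-1/2}$ near $0$); the paper accordingly claims square integrability only for $n\ge3$. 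For $n=2$ one must argue as the paper does: the Fourier \emph{cosine} coefficients of the continuous function $F_Q$ are $-\tfrac{2}{qt_k}\Im m(\widehat{f}_Q(t_k))=O\bigl(k^{-(n+1)/2}\bigr)$, absolutely summable already for $n=2$, so the cosine series converges uniformly to $F_Q$; no $\cL^2$ hypothesis on $f_Q$ is needed.

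Second, the dominating function you propose for (13) uses $|\psi^*(t_k,u)|=O(|u|^{-1})$ for large $|u|$. Taken literally this yields $\int_{\bR}|\psi^*(t_k,u)|^n\,du=O\bigl(t_k^{(1-n)/2}\bigr)$ and hence a majorant $\sum_k k^{-1}\sqrt{t_k}\,O\bigl(t_k^{(1-n)/2}\bigr)=\sum_k O(k^{-n/2})$, which diverges for $n=2$. The non-stationary-phase integration by parts you invoke actually produces the stronger bound $|\psi^*(t_k,u)|=O\bigl(t_k^{-1}|u|^{-1}\bigr)$ for $|u|\ge2$ (the paper's (19), via A.S.\ 7.3.22), and with that factor the majorant becomes $\sum_k O\bigl(k^{-(n+1)/2}\bigr)$, summable for all $n\ge2$; you must carry the $t_k^{-1}$. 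A smaller point: your proof of (11) closes the contour with ``vanishing arcs at infinity,'' but on the arc joining the positive real axis to the ray $\arg z=-\pi/4$ the easy bound $|\psi(t_k,z)|\le1$ is not enough and a Jordan-lemma refinement is required; the paper avoids this by passing to the truncated paths $\gamma_{kv}$ of (27) and bounding the two horizontal connecting rays explicitly by (20), $O(v^{-n})$, before letting $v\to\infty$. With these repairs your argument reproduces the paper's proof.
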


The density of $Q$ is square integrable for all $n \geq 3$ and it is obtained by the derivative under the integral at least for all $n \geq 4$.

\paragraph{\sc Remarks : } For numerical evaluations the upper limit of integration $y = \infty$ in the second integral in (14) can be lowered to small values $y_0$, depending on $n$ and $f$, because of the very rapid decrease of the integrand.

If $f(1/2 + x) = f(1/2-x)$ then the integral in (13) can be replaced by $2 \int^\infty_{-1/2} \ldots du$, the first integral in (14) by $2 \int^{1/2}_0 \ldots du$, and it is
\[
\int^\infty_0 \left( \psi \left( t_k, -y-\sqrt{-it_k}\right)\right)^n dy = \int^\infty_0 \left( \psi \left( t_k, y\right)\right)^n dy.
\]

Before the proof we look at the order of magnitude of $F_Q(x)$, $1-F_Q(q-x)$, $x \to 0$, if $f$ is only supposed to be bounded. In particular, for the uniform distribution on $[0,1]$ $ F_Q(x)$ is asymptotically given by the volume of a thin cylinder whose axis goes through $(0,\ldots,0)$ and $(1,\ldots,1)$. Thus 
\[
F_Q(x) \simeq \sqrt n \ b_{n-1} x^{(n-1)/2},\ x \to 0,
\]
with the volume $b_{n-1}$ of the $(n-1)$-unit ball. There are exactly $(1+n-2[n/2])\left( n \atop [n/2]\right)$ corners of the $n$--unit cube with maximal distance $\sqrt q$ from this axis. If $x$ approaches $q$ then $1-F_Q(x)$ is asymptotically given by the volumes of small simplices with these corners at their tops, which leads to
\[
1-F_q(q-x) \simeq (1+n-2[n/2])\left(n \atop [n/2]\right) \frac 1{n!} \left( \frac n{4q}\right)^{n/2} x^n, \ x \to 0.
\]
Therefore, we obtain for any bounded parent density $f$ on $[0,1]$
\begin{equation}
\label{eq:15}
	F_Q(x) = O\left( x^{(n-1)/2}\right), \ 1 - F_Q(q-x) = O(x^n), \ x \to 0,
\end{equation}
with $O$--constants depending on $f$ and $n$.

With some additional regularity conditions for $f$  we can expect for the corresponding density $f_Q$
\begin{equation}
\label{eq:16}
f_Q(x) = O\left( x^{(n-3)/2}\right), \ f_Q(q-x) = O\left(x^{n-1}\right), \ x \to 0,	
\end{equation}
and therefore $f_Q(x) \to 0$, $x \to 0$, if $n \geq 4$. Then a Fourier--sine series for $f_Q$ is suitable.\\

The proof of theorem 1 is accomplished essentially by the following lemma:

\begin{lemma}
\label{lemma:1}
Let $f$ be a probability density within $BVC[0,1]$. Then the following bounds for $\psi(t,y)$, $\psi^*(t,u)$, $t>0$, $y >0$, are obtained:\\
\begin{eqnarray}
\label{eq:17,18,19}
|\psi(t,y)|,|\psi(t,-y-\sqrt{-it}| &\leq& Ct^{-1/2} \min(1,y^{-1} e^{-y^2}),\\
|\psi^*(t,u)|&\leq& C_1\ t^{-1/2},\  u \in \bR,\\ 
|\psi^*(t,u)|&\leq& C_2\ t^{-1}|u|^{-1}, |u|\geq 2, 
\end{eqnarray}\\
with suitable constants $C,C_1, C_2$ depending only on $f$. \\

Furthermore, with $t =v^2 t_k$, $t_k$ from (6), $v > 1$,\\
\begin{eqnarray}
\label{eq:20,21}
\int^\infty_0 \left| \psi\left( t_k, \pm \left( y + v \sqrt{-it_k}\right)\right)\right|^n dy &=& O(v^{-n}), \ v \to \infty, \\
\sqrt{t_k} \int^\infty_{-\infty} \left| \psi^*(t_k,u)\right|^n du &=& O\left(t_k^{-(n-1)/2}\right) \ ,
\end{eqnarray}
and
\begin{equation}
\label{eq:22}
\left|\widehat{f}_Q(t_k)\right| =O\left(t_k^{-(n-1)/2}\right)
\end{equation}
with $O$--constants  depending only on $f$ and $n$.
\end{lemma}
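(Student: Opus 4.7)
The plan is to reduce all bounds (17)--(22) to one technique: an Abel-type integration by parts against the Fresnel antiderivative $F(w)=\int_0^w e^{is^2}\,ds$, which is uniformly bounded on $\bR$. The hypothesis $f\in BVC[0,1]$ enters exactly here --- $\|f\|_\infty$ controls the boundary contributions and $\|f'\|_{L^1}$ controls the resulting remainder integrals.

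For (18), I would substitute $w=(x+u)\sqrt{t}$ in $\psi^*(t,u)=\int_0^1 f(x)\,e^{it(x+u)^2}\,dx$ to get $t^{-1/2}\int_{u\sqrt t}^{(u+1)\sqrt t} f(w/\sqrt t - u)\,e^{iw^2}\,dw$; one IBP against $F$ then yields $|\psi^*(t,u)|\le C_1 t^{-1/2}$ uniformly in $u$. For (17), writing $\psi(t,y)=e^{-y^2}\int_0^1 f(x)\,e^{itx^2-2y\sqrt{-it}\,x}\,dx$ (the real part of the exponent is $-y^2-\sqrt 2\,yx\sqrt t$ since $\Re\sqrt{-it}=\sqrt{t/2}$), the same substitution $w=x\sqrt t$ and IBP against $F$ give $|\psi(t,y)|\le C t^{-1/2}e^{-y^2}$; the complementary estimate $|\psi(t,y)|\le C t^{-1/2}y^{-1}e^{-y^2}$ follows crudely from $|\psi(t,y)|\le e^{-y^2}\int_0^1 f(x)\,e^{-\sqrt 2\,y x\sqrt t}\,dx\le \|f\|_\infty e^{-y^2}/(\sqrt 2\,y\sqrt t)$, and the minimum of the two proves (17). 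The companion bound on $\psi(t,-y-\sqrt{-it})$ reduces via $x\mapsto 1-x$ to the same estimate applied to the reflected density $f(1-\cdot)\in BVC[0,1]$. For (19), when $|u|\ge 2$ we have $|x+u|\ge 1$ on $[0,1]$; a single IBP with $e^{it(x+u)^2}=(2it(x+u))^{-1}\partial_x e^{it(x+u)^2}$ then produces a boundary term and an interior term (the latter controlled by $f'\in L^1$) both of size $O(t^{-1}|u|^{-1})$.

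The real work is (20). Writing the integrand as $e^{-y^2-\sqrt 2\,y(v+x)\sqrt{t_k}}e^{i\phi(x)}$ with $\phi(x)=\sqrt 2\,y(v+x)\sqrt{t_k}+(v+x)^2 t_k$, the exponential damping alone yields only $O(v^{-1})$ after integration in $y$, so the missing factor $v^{-(n-1)}$ must come from oscillation. Since $\phi'(x)\ge 2vt_k$ on $[0,1]$, a single IBP against $e^{i\phi}$ produces
\begin{equation*}
|\psi(t_k,y+v\sqrt{-it_k})|\le \frac{C\,(1+y\sqrt{t_k})}{v\,t_k}\,e^{-y^2-\sqrt 2\,yv\sqrt{t_k}}.
\end{equation*}
Raising to the $n$-th power, dropping the harmless factor $e^{-n\sqrt 2\,yv\sqrt{t_k}}\le 1$, and integrating give $\int_0^\infty|\psi|^n\,dy\le C^n(vt_k)^{-n}\int_0^\infty(1+y\sqrt{t_k})^n e^{-ny^2}\,dy=O(v^{-n})$. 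The $-$-sign case is identical once one notes that $v-x\ge v-1$ is comparable to $v$ for $v\ge 2$, which suffices for an asymptotic claim. Then (21) follows by splitting $\int_{\bR}$ at $|u|=2$: on the compact part (18) gives $\le 4C_1^n t_k^{-n/2}$, while on the tail, bounding $|\psi^*|^n\le |\psi^*|^{n-2}|\psi^*|^2$ using (18) on $n-2$ factors and (19) on the remaining two gives $\le C t_k^{-(n+2)/2}$, dominated by $t_k^{-n/2}$ for $t_k$ bounded away from $0$; multiplying by $\sqrt{t_k}$ gives (21). Finally (22) is immediate from formula (11) and (21): $|\widehat f_Q(t_k)|=\sqrt{n/\pi}\,\sqrt{t_k}\int_{\bR}|\psi^*(t_k,u)|^n\,du=O(t_k^{-(n-1)/2})$. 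The main obstacle I anticipate is the bookkeeping in the IBP step for (20), where the exponential damping and the oscillatory phase interact.
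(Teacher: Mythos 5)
Your proposal is correct and follows essentially the same route as the paper: integration by parts in $x$ against a bounded antiderivative (your Fresnel function $F$ is the paper's $\erf$ restricted to the sector $|\Re e(z)|\geq|\Im m(z)|$), the modulus identity $|\exp(-(y+x\sqrt{-it})^2)|=e^{-y^2-xy\sqrt{2t}}$ for the $y$--decay, and a non--stationary--phase integration by parts for (19) and (20) where the paper invokes the $\erf$ asymptotics (A.S.~7.3.22) and splits the $y$--range instead of carrying a $y$--dependent pointwise bound. The one caveat is your derivation of (22) from formula (11): the paper establishes (11) only later, in the proof of Theorem~1, using (20) of this very lemma, so you should either make explicit that your chain (20) $\Rightarrow$ (11) $\Rightarrow$ (22) is non--circular, or --- as the paper does --- bound $|\widehat{f}_Q(t_k)|$ directly by $\sqrt{n/\pi}\int_{\gamma_k}|\psi(t_k,z)|^n\,|dz|$ and apply (17) on the two rays and (18) on the middle segment of length $\sqrt{t_k}$.
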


\begin{proof}
By partial integration we find

\begin{eqnarray*}
\psi(t,y) &=& \frac 12 \sqrt \pi (-it)^{-1/2}\left(\erf \left(y + \sqrt{-it}\right) - \erf(y)\right) f(1)\\
&& - \frac 12 \sqrt{i \pi/t} \ \int^1_0 \left( \erf \left(y + x\sqrt{-it}\right) - \erf(y)\right) f'(x)dx,
\end{eqnarray*}
and therefore
\begin{equation}
\label{eq:23}
|\psi(t,y)| \leq C_1 \ t^{-1/2},
\end{equation}
since $\erf(z)$ is absolutely bounded on $\{z|\ |\Re e(z)| \geq |\Im m(z)|\}$.

With $\big|\exp \big(-\big(y + x \sqrt{-it}\big)^2\big)\big| = \big| \exp\big( - \big( y + x \sqrt{t/2} - ix\sqrt{t/2}\big)^2\big)\big| \\
= \exp\big(-y^2 - xy\sqrt{2t}\big)$ we get
\begin{equation}
\label{eq:24}
|\psi(t,y)| \leq C_2 \ t^{-1/2} y^{-1} \exp(-y^2).
\end{equation}
Together with (23) this implies (17) for $\psi(t,y)$ with $C =\max (C_1,C_2)$.  \\ Since \mbox{$\psi(t, -y-\sqrt{-it}; \ f(x)) = \psi(t,y; f(1-x))$} the bound in (17) also applies to \mbox{$\psi(t,-y-\sqrt{-it})$.}

The bound in (18) is found  by the same way as in (23) with $y$ replaced by $u\sqrt{-it}$. If $|u| \geq 2$ we have $|u+x| \geq |u|/2$ and using (A.S. 7.3.22) we obtain
\[
\left| \erf \left( \sqrt{-it} (u+x)\right) - \erf\left( \sqrt{-it}\ u\right)\right| \leq C_3 \ t^{-1/2} |u|^{-1},
\]
and consequently the bound in (19).\\

For the proof of (20) we use

\begin{eqnarray*}
\lefteqn{\int^y_0 \left| \exp\left(-\left(z + \sqrt{-it} + x\sqrt{-it_k}\right)^2\right)\right|dz}\\
&&\leq\int^y_0 \exp\left(-z^2 - z \sqrt{2t} - xz\sqrt{2t_k}\right)dz < \int^\infty_0 \exp\left(-z\sqrt{2t}\right)dz
= (2t)^{-1/2},
\end{eqnarray*}\\
and consequently  
 
\begin{eqnarray*}
\lefteqn{\left|\erf \left(y + \sqrt{-it}+ x \sqrt{-it_k}\right) - \erf\left( y + \sqrt{-it}\right)\right|}\\
&=& \left|\erf \left( \sqrt{-it}+ x \sqrt{-it_k}\right) - \erf\left(\sqrt{-it})\right)\right| +O(t^{-1/2}) = O(t^{-1/2}),
\end{eqnarray*}\\
to obtain again by partial integration
\begin{equation}
\label{eq:25}
\left| \psi \left(t_k, y+v\sqrt{-it_k}\right)\right| = O(t^{-1/2}) = O(v^{-1}), \ v \to \infty,	
\end{equation}
with an $O$--constant depending on $f$ and not on $k$. Besides
\begin{eqnarray}
\label{eq:26}
\left| \psi\left(t_k, y + \sqrt{-it}\right) \right| &\leq& \int^1_0 \left| \exp\left(-\left( y + \sqrt{-it} + x \sqrt{-it_k}\right)^2\right)\right| f(x)dx\nonumber \\
&=&  \int^1_0  \exp\left(- y^2 - y \sqrt{2t}- xy \sqrt{2t_k}\right) f(x)dx \nonumber\\
&\leq& C_1 \ \exp \left( -y^2 - y \sqrt{2t}\right)\left( y \sqrt{2t_k}\right)^{-1}.
\end{eqnarray}\\

Now the integral $\int^\infty_0 |\psi|^n dy$ is splitted over the intervals \mbox{$0<y \leq	 (2t_k)^{-1/2}$} and \mbox{$(2t_k)^{-1/2} < y$.} Following from (25) and (26) the contribution of these integrals are $O(v^{-n})$ and 
\[
O\left( t_k^{-1/2} \int^\infty_1 \exp(-nvy) y^{-n}dy\right) = O\left( e^{-nv}\right), \ v \to \infty,
\]
which provides the assertion (20) for the upper sign. The proof for the lower sign is very similar.

The relation (21) follows immediately from (18) and (19). Finally,
\[
|\widehat{f}_Q(t_k)| \leq \sqrt{n/\pi} \int_{\gamma_k} |\psi(t_k,z)|^n |dz|
\]
with $\gamma_k$ from (7), and therefore the assertion (22) follows from (17) and (18). \hfill      $\Box$
\end{proof}

\paragraph{\sc Proof of theorem 1 :} The Fourier cosine coefficients of $F_Q$ are given by $-2/(qt_k)\Im m(\widehat{f}_Q(t_k))$ and with (22) from lemma 1 we obtain for all $n \geq 2$ the absolutely and uniformly convergent series for $F_Q$, already given in (9), (10). The square integrability of $f_Q$ for all $n \geq 3$ is a consequence of (22). The change of summation and integration in (9) is justified by the majorants provided by (17) and (18) which proves (14) and gives the density $f_Q$ by the derivative under the integral at least for all $n \geq 4$.

To prove (13) the paths $\gamma_k$ are modified to

\begin{eqnarray}
\label{eq:27}
\gamma_{kv} &:=& \{ y -vw_k\,|\, y <0\} \ \cup \ \{uw_k\,|\,-v\leq u\leq v\}\nonumber \\
&&\cup \ \{y +vw_k\,|\,y>0\}, \ w_k = \sqrt{-it_k}, \ v > 1.
\end{eqnarray}
Then
\[
\int_{\gamma_{kv}}(\psi(t_k,z))^n dz = w_k \int^v_{-v}(\psi(t_k,uw_k))^n du + O (v^{-n})
\]
because of (20). Therefore, equation (11) follows for $v \to \infty$. Now the representation (13) follows from the majorants obtained by (18) and (19). The density $f_Q$ is obtained again by the derivative  under the integral at least for all $n \geq 4$. \hfill $\Box$  \\

The representation in (5) for the cf $\widehat{f}_Q$ is a consequence of the "one--factor" structure of $Q$. Therefore, theorem 1 can be generalized to one--factorial quadratic forms.

\begin{theorem}
\label{theorem:2}
Let $Q = X'(D \mp cc')X$ be a one--factorial positive semi--definite quadratic form with $n$ bounded independent --- but not necessarily identically distributed --- random variables $X_j$ with densities $f_j \in BVC [a_j,b_j]$, $a_j \leq 0$, where $D = \mathrm{Diagonal} (d_1,\ldots,d_n) > 0$ and $c = (c_1,\ldots,c_n)' \in \bR^n$ with at least two components $c_j \neq 0$. With the cf $\psi^*_j (t,u)$ of $d_j(X_j \pm c_j u/d_j)^2$, $u \in \bR$, $j=1,\ldots,n$, $\lambda = 1 \mp \sum^n_{j=1} c_j^2/dj \geq 0$ and the numbers $t_k = k\pi / \sup Q$, $k \in \bN$, the cdf $F_Q$ of $Q$ is given by

\begin{eqnarray}
\label{eq:28}
\lefteqn{F_Q(x) =} \\
&& 2\pi^{-3/2} \int^\infty_{-\infty} \sum^\infty_{k=1} \Im m\bigg( \exp \left( \pm i\lambda t_ku^2\right) \sqrt{\mp it_k} \prod^n_{j=1} \psi^*_j(t_k,u)\bigg) \cdot \left( 1 - \cos (t_k x)\right) /k \ du \nonumber
\end{eqnarray}
and the density $f_Q$ is square integrable for all $n \geq 3$.
\end{theorem}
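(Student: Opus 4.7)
The plan is to mimic the three-stage argument of Theorem~1: (i) derive an integral representation for $\widehat{f}_Q$ on the real axis that exploits the one-factor structure $A = D \mp cc'$; (ii) establish $\psi_j^*$-majorants of Lemma~1 type; and (iii) assemble the absolutely convergent Fourier cosine expansion of $F_Q$ and interchange summation with integration.

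For step (i), I would write $Q = X'DX \mp (c'X)^2$ and replace $\exp(\mp it(c'X)^2)$ by the Gaussian integral $\pi^{-1/2}\int e^{-y^2 - 2y\sqrt{\mp it}\,c'X}\,dy$. Independence of the $X_j$ factors the expectation as $\prod_j \cE\exp(it d_j X_j^2 - 2y\sqrt{\mp it}\,c_j X_j)$. The substitution $y = u\sqrt{\mp it}$, followed (for $t = t_k$) by deformation of the resulting ray in $\bC$ back to the real $u$-axis (justified at infinity by the analogue of (20) applied to each $\psi_j$), completes each exponent to $it d_j(X_j \pm c_j u/d_j)^2$ up to the residual $-itc_j^2u^2/d_j$. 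Collecting these residuals with the Jacobian contribution $e^{-y^2} = \exp(\pm itu^2)$ produces the global factor $\exp(\pm i\lambda tu^2)$ with $\lambda = 1 \mp \sum c_j^2/d_j \geq 0$ (this inequality is precisely positive semi-definiteness of $A$ for the upper sign, and automatic for the lower), yielding
\[
\widehat{f}_Q(t) = \pi^{-1/2}\sqrt{\mp it}\int_{-\infty}^\infty \exp(\pm i\lambda tu^2)\prod_{j=1}^n \psi_j^*(t,u)\,du.
\]
Since $\lambda \geq 0$, the extra factor has modulus $1$ on $\bR$ and does not disturb the subsequent estimates.

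For step (ii), the partial-integration estimates that produced (18) and (19) transfer term by term to each $\psi_j^*(t,u)$: each $f_j \in BVC[a_j,b_j]$ and $\erf$ is bounded on $\{|\Re e(z)|\geq|\Im m(z)|\}$, giving $|\psi_j^*(t,u)| \leq C_j t^{-1/2}$ uniformly in $u$ and $|\psi_j^*(t,u)| \leq C'_j t^{-1}|u|^{-1}$ for $|u|$ outside a compact set depending on $a_j,b_j,c_j,d_j$. The hypothesis that at least two $c_j$ are non-zero ensures that at least two of the factors genuinely contribute $t^{-1}$ tail decay. Splitting $\int|\prod_j\psi_j^*|\,du$ into $|u|\leq U$ and $|u|>U$ in direct analogy with (21) then yields $|\widehat{f}_Q(t_k)| = O\bigl(t_k^{-(n-1)/2}\bigr)$ with constants depending only on $f_j,c_j,d_j$.

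For step (iii), the Fourier cosine coefficients of $F_Q$ on $[0,\sup Q]$ equal $-2(\sup Q\cdot t_k)^{-1}\Im m(\widehat{f}_Q(t_k))$, so
\[
F_Q(x) = 1 - \cE Q/\sup Q + (2/\pi)\sum_{k=1}^\infty \Im m(\widehat{f}_Q(t_k))(1-\cos(t_kx))/k
\]
converges absolutely and uniformly already for $n = 2$ (using the $(1-\cos(t_kx))/k$ weight against the bound from (ii)). Inserting the $u$-integral of step (i) and swapping $\sum_k$ with $\int du$, justified by the $L^1(du)$-dominant provided by the bounds of (ii), delivers (28). Square integrability of $f_Q$ for $n \geq 3$ follows from $\sum|\widehat{f}_Q(t_k)|^2 < \infty$ by Parseval. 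The main obstacle I expect is step (i): keeping the $\pm/\mp$ bookkeeping clean so that $\lambda$ emerges in exactly the stated form simultaneously for both sign conventions in $D\mp cc'$, and verifying that the ray-to-real-axis path deformation picks up no contribution at infinity; the remaining pieces are a routine product-rule translation of Lemma~1 and the proof of Theorem~1.
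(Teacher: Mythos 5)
Your proposal is correct and follows essentially the same route as the paper: the paper obtains the representation (30) by citing Theorem~1 of Royen (2007b) for the real-axis integral (29) and then rotating the contour by $\mp\pi/4$ (your Gaussian linearization of $(c'X)^2$ plus the substitution $y=u\sqrt{\mp it}$ is exactly that derivation done from scratch), then establishes the bounds $|\psi^*_j(t_k,u)|=O(t_k^{-1/2})$ and $O(t_k^{-1}|u|^{-1})$ as in (18)--(19), concludes $|\widehat{f}_Q(t_k)|=O(t_k^{-(n-1)/2})$, and finishes via the Fourier cosine series and the interchange of $\sum_k$ with $\int du$. Your explicit observation that the hypothesis of at least two nonzero $c_j$ is what makes the tail of the $u$-integral converge is a point the paper leaves implicit, but it is the same argument.
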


\paragraph{\sc Remarks : } The condition $a_j \leq 0$ was only introduced to get $[0, \sup Q]$ as the range of $Q$. With different intervals $[a_j, b_j]$ and indefinite forms $Q$ general Fourier series for $f_Q$ and $F_Q$ can be derived in the same way.

The corresponding generalization of (14) can also be derived if the paths $\gamma_k$ are replaced by the $\gamma_{kv}$ in (27) with a sufficiently large number $v$, depending on the $4n$ numbers $a_j, b_j, c_j, d_j$.

\paragraph{\sc Proof of theorem 2 :} The upper signs below refer to the negative sign in $D \mp cc'$. From theorem 1 in Royen (2007b) the cf $\widehat{f}_Q$ is given by 
\begin{equation}
\label{eq:29}
	\widehat{f}_Q(t) = \pi^{-1/2} \int^\infty_{-\infty} \exp \left(-\lambda y^2\right) \prod^n_{j=1} \psi_j (t,y) dy
\end{equation}
with $\psi_j(t,y) = \cE \exp \left( -d_j\left( \sqrt{-it}\ X_j + c_j y/d_j\right)^2 \right), \ t,y \in \bR$. Turning the way of integration in (29) by $\mp \pi/4$ we get
\begin{equation}
\label{eq:30}
	\widehat{f}_Q(t) = \sqrt{\mp \ it/\pi} \int^\infty_{-\infty} \exp( \pm \ i\lambda tu^2) \prod^n_{j=1} \psi^*_j(t,u) du .
\end{equation}
With the numbers $t_k$ we obtain by the same method as used for (18), (19) the estimates 

\begin{eqnarray}
\label{eq:31}
|\psi^*_j(t_k,u)| &=& O(t^{-1/2}_k), \ u \in \bR, \nonumber\\
\mbox{and} && \\
|\psi^*_j(t_k,u)| &=& O(t^{-1}_k |u|^{-1}), \ c_j \neq 0, \ |u| \geq u_0 ,\nonumber
\end{eqnarray}\\
with $O$--constants depending on $f$ and $a_j,b_j,c_j,d_j$, and a sufficiently large number $u_0$ depending on the $4n$ numbers $a_j,b_j,c_j,d_j$. Therefore
\[
|	\widehat{f}_Q(t_k)| = O\left(t_k^{-(n-1)/2}\right),
\]
and consequently $f_Q \in \cL^2 (0,\sup Q)$ for $n \geq 3$.

The Fourier cosine series for $F_Q$ from (9) with 	$\widehat{f}_Q(t_k)$ obtained from (30) is again absolutely and uniformly convergent for all $n \geq 2$ and the assertion in (28) follows by the change of summation and integration in this series, justified by the majorant derived from (31). \hfill $\Box$ \\

This section is concluded by some formulas for $\psi(t,z) = \cE \exp \left( - \left( z + \sqrt{-it}\ X\right)^2\right)$. If $X$ has a polynomial density $f$ of degree $d$ we get 
\begin{equation}
\label{eq:32}
\psi(t,z) = \frac 12 \sqrt{-i\pi/t}\ \exp(-z^2) f\Big( - \frac 12 \sqrt{i/t} \frac\partial{\partial z}\Big) \left( \exp(z^2)(\erf(z+\sqrt{-it})-\erf(z))\right)	
\end{equation}
or equivalently
\begin{eqnarray}
\label{eq:33}
\lefteqn{\psi(t,z) = \sqrt{i/t} \int^{z+\sqrt{-it}}_z \exp (-w^2)f((w-z)/\sqrt{-it})dw\nonumber}\\[-2ex]
&&\\[-2ex]
&&\textstyle{ = \sum\limits^d_{k=0} f^{(k)} \left(-z/\sqrt{-it}\right){}_1F_1 \left(\frac{k+1}2, \frac{k+3}2,-w^2\right)\left(w/\sqrt{-it}\right)^{k+1}\big/(k+1)!\Big|^{z+\sqrt{-it}}_{w=z}} \ . \nonumber
\end{eqnarray}\\

\noindent In particular,
\begin{equation}
\label{eq:34}
\psi^*(t,u) =  \sum^d_{k=0} f^{(k)} (-u){}_1F_1 \left(\textstyle{\frac{k+1}2, \frac{k+3}2},itv^2\right)v^{k+1}\big/(k+1)!\Big|^{u+1}_{v=u} \ . 
\end{equation}
Writing $z^k$ as a linear combination of Hermite polynomials $H_j$ the functions 
\[
\int^w_0 e^{-z^2} z^k dz = {}_1F_1 \left(\textstyle{\frac{k+1}2, \frac{k+3}2},-w^2\right)w^{k+1}\big/(k+1) 
\]
are representable by linear combinations of $\erf(w)$ and $H_j(w) \exp(-w^2),\  j=0,\ldots,k-1$.

A general formula for any bounded or unbounded density $f$ with the moments $\mu_j$ is given by
\begin{equation}
\label{eq:35}
\psi(t,z) = \exp(-z^2) \sum^\infty_{j=0} \mu_j H_j(z) (-\sqrt{-it})^j/j!.
\end{equation}
There is an alternative if the cf $\widehat{f} = \widehat{f}_c + i\widehat{f}_s$ of $f$ is available. With the "base functions"

\begin{eqnarray}
\label{eq:36}
\lefteqn{\psi_{2m}(t,z) := \psi(t,z; \exp(2m\pi ix))}\\
&=& \frac 12 \sqrt{i \pi/t} \ \exp\left(-im^2\pi^2/t\right) \exp\left(2m\pi z \sqrt{-i/t}\right)\erf\left(z+x\sqrt{-it} + m\pi\sqrt{-i/t}\right)\Big|^1_{x=0},  \nonumber\\
&& \hfill t>0\ ,\nonumber
\end{eqnarray}\\[1ex]
in particular with $z = u\sqrt{-it}$

\begin{eqnarray}
\label{eq:37}
\lefteqn{\psi^*_{2m}(t,u)}\\
 &=& \frac 12 \sqrt{i\pi/t} \ \exp\left(-im^2\pi^2/t\right)\exp\left(-2m\pi iu\right) \erf\left(\sqrt{-it}\ (u+x)  + m\pi\sqrt{-i/t}\right)\Big|^1_{x=0}\ ,\nonumber
\end{eqnarray}
and
\begin{eqnarray}
\label{eq:38}
\lefteqn{
\begin{array}{lcl}
\psi_{c,m}(t,z) & := & \psi(t,z;\cos(m\pi x))\\
\psi_{s,m}(t,z) & := & \psi(t,z;\sin(m\pi x))
\end{array}\quad = }\\
&\pm& \frac 14 \sqrt{\pm \ i\pi/t} \ \exp\left( - im^2 \pi^2/(4t)\right) \cdot \nonumber \\
&&\left( \exp \left(-m\pi z\sqrt{-i/t}\right) \erf\left( z+x \sqrt{-it} - \frac 12 m\pi \sqrt{-i/t}\right) \right.\pm \nonumber\\
&&\left. \exp \left(m\pi z\sqrt{-i/t}\right) \erf\left( z+x \sqrt{-it} + \frac 12 m\pi \sqrt{-i/t}\right) \right) \Big|^1_{x=0}\ , \nonumber
\end{eqnarray}
where the upper signs refer to $\psi_{c,m}$, and with the Fourier coefficients $\widehat{f}(-2m\pi)$ of $f$ it follows
\begin{equation}
\label{eq:39}
\psi(t,z;f) = \sum_{m\in \bZ} \widehat{f}(-2m\pi)	\psi_{2m}(t,z),
\end{equation}
if $f \in \cL^2(0,1)$. Similar series are obtained from a Fourier sine or Fourier cosine series for~$f$.

For unbounded $f$, and in particular for $f \notin \cL^2(0,1)$, we can use the following relation obtained by partial integration.
\begin{equation}
\label{eq:40}
	\psi(t,z;f) = \exp(-z^2) - 2z \sqrt{-it}\ \psi(t,z;1-F) + 2it \psi(t,z;(1-F)id),
\end{equation}
with $id(x) = x$ and the cdf $F$ of $X$.

The Fourier cosine series for $1- F(x)$ and the Fourier sine series for $x(1-F(x))$ have the coefficients
\begin{equation}
\label{eq:41}
	\frac 12 A_0 = \mu_1 = \cE(X), \ A_m = 2 \widehat{f}_s (m\pi)/(m\pi)
\end{equation}
and
\begin{equation}
\label{eq:42}
	b_m = 2\left( \widehat{f}_s (m\pi)/(m\pi) - \widehat{f}'_s (m\pi)\right)\Big/(m\pi), \ m \in \bN\ ,
\end{equation}
respectively, where $\widehat{f}'_s $ denotes the derivative of $\widehat{f}_s = \Im m(\widehat{f})$. Therefore

\begin{eqnarray}
\label{eq:43}
\lefteqn{\psi(t,z;f) = \exp(-z^2) -}\\
&&2z \sqrt{-it} \left( \mu_1 \psi_0(z) + \sum^\infty_{m=1} A_m \psi_{c,m}(t,z)\right) + 2 it \sum^\infty_{m=1} b_m \psi_{s,m}(t,z).\nonumber
\end{eqnarray}\\[1ex]
Similar formulas are obtained from Fourier series for $1- F(x)$ and $x (1-F(x))$ with the $ONS$ $ \{\exp (2m\pi ix)| m \in \bZ\}$ on $[0,1]$. E.g. the cf of any beta density $f(x) = x^{p-1}(1-x)^{q-1}/ B(p,q)$ is given by \mbox{$\widehat{f}(t) = {}_1F_1 (p,p+q, it)$} with the derivative $\widehat{f}'(t) = ip/(p+q)\  {}_1F_1 (p+1,p+q+1, it)$. Possibly, repetition of the step leading from (39) to (43) might provide even more rapidly converging series for $\psi$. Unevitably, values of $\psi$ with large arguments are needed but there are at least some programs available which provide rather accurate values of some well known transcendental functions for large arguments.

\section*{\large 3. \ The distribution of $Q$ for unbounded  densities  }
\label{section:3}

A first idea of the order of magnitude of $F_Q(x)$, $x\to 0$, is found by the following inequalities. With the range $R$ of the random sample $X_1,\ldots, X_n$ we get 
\[
\frac 12 \ R^2 \leq Q \leq \frac 14 \ n R^2, \ n \geq 2,
\]
and consequently
\begin{equation}
\label{eq:44}
P\{ R \leq 2(x/n)^{1/2}\} \leq  P\{ Q \leq x\} \leq P\{ R \leq (2x)^{1/2}\}\ .	
\end{equation}
At first it is useful to investigate the special density
\begin{equation}
\label{eq:45}
f_p(x) = px^{p-1}, \ 0 < x \leq 1, \ 0 < p < 1\ .	
\end{equation}

\begin{lemma}
\label{lemma:2}
\begin{equation}
\label{eq:46}
	P\{Q \leq x\} = \Bigg\{ 
	\begin{array}{ll}
	O\left( x^{min(pn,n-1)/2}\right) & , \ np \neq n-1\\
	O( x^{(n-1)/2} \ln(1/x) & , \ np = n-1
	\end{array} \Bigg\}\ , \ x\to 0 
\end{equation}
for $Q$ belonging to the parent density $f_p$ from (45).
\end{lemma}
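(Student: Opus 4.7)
The plan is to exploit the inequality (44) given just before the lemma, which sandwiches $P\{Q\leq x\}$ between two probabilities involving the sample range $R=X_{(n)}-X_{(1)}$. Since both bounds involve $r\asymp\sqrt{x}$, it suffices to establish the corresponding estimate
\[
P\{R\leq r\} = \Bigl\{O\bigl(r^{\min(pn,n-1)}\bigr) \text{ if } np\neq n-1;\ O\bigl(r^{n-1}\ln(1/r)\bigr) \text{ if } np=n-1\Bigr\},\ r\to 0,
\]
for the parent $f_p$; substitution $r=\sqrt{2x}$ in the upper half of (44) then yields the claim.

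To evaluate $P\{R\leq r\}$ I would use the standard representation
\[
P\{R\leq r\} = n\int_0^{1} f_p(u)\bigl[F_p(\min(u+r,1))-F_p(u)\bigr]^{n-1}du,
\]
with $F_p(x)=x^p$, and split the domain into three pieces: $[0,r]$, $[r,1-r]$ and $[1-r,1]$. On the first piece, the substitution $u=rv$ rescales the integrand so that all factors of $r$ collect into $r^{pn}$, leaving a convergent integral in $v\in[0,1]$; this gives a contribution of exact order $r^{pn}$. The right-hand boundary piece $[1-r,1]$ contributes only $O(r^n)$ using $1-u^p\le p(1-u)$ and is therefore negligible.

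The core estimate is the middle piece. Here I exploit the two complementary concavity bounds that hold for $0<p<1$:
\[
(u+r)^p-u^p \;\le\; p\,r\,u^{p-1},\qquad (u+r)^p-u^p\;\le\; r^p.
\]
Inserting the first bound into the integrand produces $p^n r^{n-1}\int_r^{1-r} u^{pn-n}du$. The exponent $pn-n$ in this remaining integral changes behaviour precisely at $pn=n-1$: for $pn>n-1$ the integral converges at $0$ and the piece is $O(r^{n-1})$; for $pn<n-1$ the integral behaves like $r^{pn-n+1}$ and the piece is $O(r^{pn})$; for $pn=n-1$ a logarithm appears, yielding $O(r^{n-1}\ln(1/r))$.

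Combining the three pieces, the $[0,r]$ contribution $O(r^{pn})$ and the $[r,1-r]$ contribution described above add up to $O(r^{\min(pn,n-1)})$ in the off-diagonal cases and to $O(r^{n-1}\ln(1/r))$ on the critical line $pn=n-1$. Then (44) delivers the stated orders in $x$. The one conceptual hurdle is that the dominant contribution to $P\{R\leq r\}$ switches between the two pieces as $p$ crosses $1-1/n$, and the intuition for this switch is that for small $p$ the density piles up near $0$ so the range is small whenever all observations fall into the initial window $[0,r]$ (giving the rate $r^{pn}$), whereas for $p$ close to $1$ the usual "translation" behaviour of the range dominates (giving the rate $r^{n-1}$); keeping the two pieces cleanly separated is the only point requiring care.
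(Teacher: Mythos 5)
Your proposal is correct and follows essentially the same route as the paper: the same reduction via (44) to $P\{R\le r\}$ with $r\asymp\sqrt{x}$, the same three-way split of the range integral over $(0,r]$, $(r,1-r)$, $[1-r,1]$, the same rescaling $u=rv$ on the first piece, and the same concavity bound $(u+r)^p-u^p\le pru^{p-1}$ driving the case analysis at $pn=n-1$ on the middle piece. No substantive differences to report.
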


\begin{proof}
$P\{R \leq \varepsilon\} = np \int^1_0 \left( \min (1, (x+\varepsilon)^p) - x^p\right)^{n-1} x^{p-1} dx = np (I_1 + I_2 + I_3)$, where the $I_j$  denote the subsequent integrals on $(0,\varepsilon], (\varepsilon, 1-\varepsilon), [1 - \varepsilon, 1]$. The last integral is $O(\varepsilon^n)$ and can be neglected.
\begin{eqnarray*}
I_1 &=& \varepsilon^{np} \int^1_0 \left( (1+t)^p - t^p\right)^{n-1} t^{p-1} dt = C \varepsilon^{np}.\\
I_2 &=& p^{n-1} \varepsilon^{np} I^*_2 \qquad \mbox{ with }\\
&& \int^{1/\varepsilon -1}_1 (1+t)^{(p-1)(n-1)} t^{p-1} dt \leq I^*_2 \leq \int^{1/\varepsilon -1}_1 t^{(p-1)n}  dt.
\end{eqnarray*}
Thus 
\[
\begin{array}{ll}
I^*_2 \leq  ((1-p)n-1)^{-1}\ , & n(1-p) > 1 \Longleftrightarrow np < n-1 \ ,\\
I^*_2 \leq  \ln(1/\varepsilon )\ , & np = n-1\ ,\\
I^*_2 \leq  ((p-1)n +1)^{-1} \varepsilon^{(1-p)n-1} \ , \quad & np >n-1 \ ,
\end{array}
\]
and therefore
\[
I_2 = \Bigg\{ 
	\begin{array}{ll}
	O\left( \varepsilon^{min(np,n-1)}\right) & , \ np \neq n-1\\
	O( \varepsilon^{n-1} \ln(1/\varepsilon) & , \ np = n-1
	\end{array} \Bigg\}\ .
\]
Now the assertion follows from (44) with $\varepsilon = (2x)^{1/2}$. \hfill $\Box$
\end{proof}

A more accurate asymptotic relation is also available:

\begin{theorem}
\label{theorem:3}
The cdf $F_Q$ of $Q$ belonging to the density $f_p(x) = px^{p-1}$, $0 < p < 1$, satisfies the following asymptotic relations:
\begin{equation}
\label{eq:47}
	F_Q(x) \simeq C_{pn}x^\delta , \ x \to 0 , \ \delta = \min (np, n-1)/2, \ np\neq n-1\ ,
\end{equation}
\begin{equation}
\label{eq:48}
	F_Q(x) \simeq C_{pn} x^{(n-1)/2} \ln (1/x), \ np = n-1.
\end{equation}
With 
\begin{eqnarray}
\label{eq:49}
h^\pm_p(y) &:=& p \int^\infty_0 \exp(-(x\pm y)^2) x^{p-1} dx\nonumber\\[-2ex]	
&&\\[-2ex]
&=& 2^{-p/2} \Gamma (p+1)\exp (-y^2/2)U(p-1/2, \pm y\sqrt 2)\ ,\nonumber	
\end{eqnarray}
where $U$ denotes a parabolic cylinder function, the constants $C_{pn}$ are determined by
\begin{equation}
\label{eq:50}
	\Gamma(1+\delta) \sqrt{\pi/n}\ C_{pn} = 
	\begin{cases}
	\displaystyle\int^\infty_0 \left( (h^-_p(y))^n + (h^+_p(y))^n\right)dy\ , & np < n-1\\
	(p\sqrt \pi)^n/((p-1)n+1)\ , & np > n-1 \\
	(p\sqrt \pi)^n/2 \ , & np = n-1
	\end{cases}
\end{equation}
\end{theorem}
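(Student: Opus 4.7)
My plan is to derive the small-$x$ asymptotics of $F_Q$ from the large-$\tau$ behaviour of the Laplace transform $\cE\exp(-\tau Q)$, then invoke Karamata's Tauberian theorem. Setting $t=i\tau$ in (5) gives $\cE\exp(-\tau Q) = \sqrt{n/\pi}\int_{-\infty}^{\infty}\psi(\tau,y)^n\,dy$ with $\psi(\tau,y)=\cE\exp(-(y+\sqrt{\tau}\,X)^2)$. For $f=f_p$ the substitution $z=\sqrt{\tau}\,x$ rewrites this as $\psi(\tau,y) = p\tau^{-p/2}\int_0^{\sqrt{\tau}}\exp(-(y+z)^2)\,z^{p-1}\,dz$, so for each fixed $y$ the upper limit may be sent to $\infty$, giving $\psi(\tau,y)\simeq \tau^{-p/2}h_p^+(y)$ for $y\ge 0$ and $\psi(\tau,-y)\simeq \tau^{-p/2}h_p^-(y)$ for $y>0$. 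The two halves of the $y$-integral are governed by the dichotomy $h_p^+(y)=O(e^{-y^2})$ versus $h_p^-(y)\sim p\sqrt{\pi}\,y^{p-1}$ as $y\to\infty$, and this alone decides which of the three regimes dominates.

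In the subcritical case $np<n-1$ the tail exponent $n(p-1)<-1$ makes $\int_0^\infty((h_p^+)^n+(h_p^-)^n)\,dy$ finite. I would dominate $\psi$ uniformly in $\tau$ by the majorants $|\psi(\tau,y)|\le p\Gamma(p)e^{-y^2}(2y\sqrt{\tau})^{-p}$ for $y>0$ and $|\psi(\tau,-y)|\le C\tau^{-p/2}(1+y)^{p-1}$ (both obtained by the same partial-integration/$\erf$ technique as in Lemma 1, applied now to the incomplete integral defining $\psi$ for $f_p$), then pass to the limit under the $y$-integral, obtaining $\cE\exp(-\tau Q)\simeq \sqrt{n/\pi}\,\tau^{-np/2}\int_0^\infty((h_p^+)^n+(h_p^-)^n)\,dy$. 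Karamata's Tauberian theorem with exponent $np/2$ then yields $F_Q(x)\simeq C_{pn}x^{np/2}$ with the constant matching the first branch of (50).

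In the supercritical case $np>n-1$ that integral diverges at $\infty$ and the mass of $\int\psi^n\,dy$ is instead collected on the transitional window $1\ll y\ll\sqrt{\tau}$. There Laplace's method applied to $p\tau^{-p/2}\int_0^{\sqrt{\tau}}e^{-(z-y)^2}z^{p-1}dz$ gives the uniform asymptotic $\psi(\tau,-y)\simeq \tau^{-p/2}p\sqrt{\pi}\,y^{p-1}$, while contributions from $y$ bounded or $y>\sqrt{\tau}$ (where the Gaussian peak leaves the integration range) are of strictly lower order. Hence $\int_1^{\sqrt{\tau}}\psi(\tau,-y)^n\,dy \sim \tau^{-np/2}(p\sqrt{\pi})^n\int_1^{\sqrt{\tau}} y^{n(p-1)}\,dy \sim (p\sqrt{\pi})^n((p-1)n+1)^{-1}\tau^{-(n-1)/2}$, and Karamata produces the second branch of (50). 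At the critical value $np=n-1$ one has $n(p-1)=-1$ and the same computation yields $\int_1^{\sqrt{\tau}}y^{-1}\,dy=(1/2)\log\tau$, so $\cE\exp(-\tau Q)\simeq (1/2)\sqrt{n/\pi}\,(p\sqrt{\pi})^n\tau^{-(n-1)/2}\log\tau$; Karamata with the slowly varying correction $\log\tau$ then delivers the third branch.

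The hard part will be producing the uniform-in-$\tau$ majorants and the uniform error control $\psi(\tau,-y)=\tau^{-p/2}(h_p^-(y)+o(1))$ across the full range $0\le y\le\sqrt{\tau}$; these are needed simultaneously for the dominated-convergence step in the subcritical case and the Laplace asymptotic in the super- and critical cases. I plan to extract them by the same $\erf$-based partial integration used in Lemma 1, applied to the incomplete integral defining $\psi(\tau,\cdot)$ for $f_p$, together with a direct Gaussian-tail estimate on the interval $z\in(\sqrt{\tau},\infty)$ that we are truncating away, so that the same analysis handles all three branches of (50) in a unified way.
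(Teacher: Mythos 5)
Your proposal is correct and follows essentially the same route as the paper: large-$t$ asymptotics of the Laplace transform via the rescaled integrand $t^{-p/2}h_p^{\pm}(y)$, the dichotomy between the integrable tail $h_p^{+}=O(e^{-y^2})$ and $h_p^{-}(y)\sim p\sqrt{\pi}\,y^{p-1}$ driving the three regimes, and Karamata's Tauberian theorem to transfer the result to $F_Q$. The ``hard part'' you flag --- uniform error control for $y$ near $\sqrt{t}$ --- is exactly what the paper supplies by splitting $\int_0^\infty(\lambda(t,-y))^n\,dy$ over $(0,(1-\varepsilon)\sqrt t)$, $[(1-\varepsilon)\sqrt t,\sqrt t]$, $(\sqrt t,(1+\varepsilon)\sqrt t]$, $((1+\varepsilon)\sqrt t,\infty)$ with $\varepsilon(t)=t^{-1/4}$ and showing the middle pieces are $o(t^{-(n-1)/2})$.
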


\begin{proof}
The LT $f^*_Q$ of $Q$ is
\[
f^*_Q(t) = \sqrt{n/\pi} \int^\infty_{-\infty} (\lambda(t,y))^n \ dy
\]
with
\begin{eqnarray}
\label{eq:51}
\lambda(t, \pm y) &=& p \int^1_0 \exp \left( - (\pm y + x \sqrt t)^2\right) x^{p-1} dx\nonumber\\
&=& t^{-p/2} p \int^{\sqrt t}_0 \exp\left( - (x \pm y)^2\right) x^{p-1} dx\\
&<& t^{-p/2} h^\pm_p(y)\ . \nonumber
\end{eqnarray}
With $0 < y < (1 - \varepsilon) \sqrt t$, \ $0 < \varepsilon < 1$, \ we get for large $t$
\begin{eqnarray}
\label{eq:52}
\int^\infty_{\sqrt t}\exp (-(x-y)^2) x^{p-1} dx &\simeq& \frac 12 \sqrt\pi \ t^{(p-1)/2} \erf c(\sqrt t -y)\nonumber\\
& <& \frac 12 \ t^{(p-1)/2}\ t^{-1/2} \varepsilon^{-1} \exp(-\varepsilon^2t),
\end{eqnarray}
and therefore 
\begin{equation}
\label{eq:53}
	\lambda(t,-y) \simeq t^{-p/2} h^-_p(y), \ t \to \infty, \ 0< y < (1-\varepsilon)\sqrt t\ .
\end{equation}
From (A.S. 19.12.3) and (A.S. 13.5.1) or from (A.S. 19.10.3)
\[
\lim_{y \to \infty} y^{1-p} h^-_p(y) = p \sqrt\pi
\]
can be derived. Thus,
\begin{equation}
\label{eq:54}
	\int_0^\infty (h^-_p(y))^n dy < \infty \quad \mbox{ if } \ np < n-1\ .
\end{equation}
Obviously
\[
\lambda(t,y) \simeq t^{-p/2} h^+_p(y), \ t \to \infty \ ,
\]
\begin{equation}
\label{eq:55}
\hspace*{-2em}\mathrm{and} \\
\end{equation}
\[
\int^\infty_0 (h^+_p(y))^n dy < \infty \ .\nonumber
\]
Now we obtain from (52), (54), (55)
\begin{equation}
\label{eq:56}
f^*_Q(t) \simeq \sqrt{n/\pi} \left( \int_0^\infty \left((h^-_p(y))^n + (h^+_p(y))^n\right)dy\right) t^{-pn/2}\ , \ t \to \infty,
\end{equation}
if $ np < n-1$\ . 

To treat the case $n >np> n-1$ the parameter $\varepsilon$ in (53) is replaced by $\varepsilon(t) = t^{-1/4}$, but nevertheless it will be denoted by $\varepsilon$ for simplicity.
\[
I = \int^\infty_0 (\lambda(t,-y))^n dy
\]
is splitted into $I_1 + I_2 + I_3 + I_4$ over the intervals $(0, (1 - \varepsilon)\sqrt t)$, $[(1-\varepsilon) \sqrt t, \sqrt t]$, \mbox{ $(\sqrt t, (1 + \varepsilon) \sqrt t]$,} $((1+\varepsilon) \sqrt t, \infty)$. Then
\begin{equation}
\label{eq:57}
I_1 \simeq t^{-np/2} \int^{(1-\varepsilon)\sqrt t}_0 (h^-(y))^n dy \simeq (p\sqrt\pi)^n((p-1)n+1)^{-1} \ t^{-(n-1)/2}\ .	
\end{equation}
For $I_2$ 
\begin{eqnarray*}
\lambda\left(t,-(1-\delta)\sqrt{t}\right)&=& p \int^1_0 \exp\left( -t(x-(1-\delta))^2\right) x^{p-1}\ dx\\
&=& p(1-\delta)^p \int^{(1-\delta)^{-1}}_0 \exp\left(-\tau(\xi-1)^2\right) \xi^{p-1}\  d\xi\ ,
\end{eqnarray*}
\[
\tau =(1-\delta)^2 t, \ \xi = (1-\delta)^{-1}x, \ 0 \leq \delta \leq \varepsilon = t^{-1/4},
\]
is investigated for $t \to \infty$. With the abelian part of the Tauber theorem for Laplace transforms of measures on $(0, \infty)$, see e.g. Feller (1971), it follows
\begin{eqnarray*}
\lefteqn{p(1-\delta)^p \int^1_0 \exp(-\tau\xi^2)(1-\xi)^{p-1}d\xi}\\
&\simeq &\frac 12 \ p \int^1_0 \exp(-\tau\zeta)(1-\sqrt\zeta)^{p-1} \zeta^{-1/2} d\zeta \simeq \frac 12 \ p \sqrt \pi \ t^{-1/2}, 
\end{eqnarray*}
and
\[
p(1-\delta)^p \int^{(1-\delta)^{-1}}_1 \exp\left(-\tau(\xi-1)^2\right) \xi^{p-1}\  d\xi \simeq \frac 12\ p \sqrt \pi \ t^{-1/2} \erf\left(\delta \sqrt t\right)\ .
\]
Thus,
\[
\lambda\left(t,-(1-\delta)\sqrt{t}\right)\simeq \frac 12 \ p \sqrt \pi \ t^{-1/2} (1 + \erf(\delta \sqrt t))
\]
and 
\begin{equation}
\label{eq:58}
	I_2 \simeq O\left(\varepsilon t^{-(n-1)/2}\right) = o\left( t^{-(n-1)/2}\right)\ .
\end{equation}
Furthermore
\[
I_3 < I_2
\]
since $\lambda(t, - (1+\delta)\sqrt t) \leq \lambda (t, - (1-\delta)\sqrt t)$. With $y = (1+u) \sqrt t$ we get
\begin{eqnarray*}
I_4 &= &t^{1/2} \int^\infty_\varepsilon \left( p \int^1_0 \exp \left( -t(1-x+u)^2\right) x^{p-1}\ dx\right)^n \ du\\
&=& o\left( t^{1/2} \int^\infty_\varepsilon \exp \left(-ntu^2\right)du\right) = o\left(\exp\left( -n\sqrt t \right)\right)\ .
\end{eqnarray*}
Together with (57) and (58) it follows
\[
\int^\infty_0(\lambda(t,-y))^n dy \simeq I_1\ .
\]
Then with 
\[
\int^\infty_0(\lambda(t,y))^n dy < t^{-np/2} \int^\infty_0 (h^+_p(y))^n dy = o(I_1)
\]
we obtain
\begin{equation}
\label{eq:59}
	f^*_Q(t) \simeq \sqrt{n/\pi}\ (p\sqrt \pi)^n((p-1)n+1)^{-1} \ t^{-(n-1)/2}, \ np >n-1\ .
\end{equation}
Now from (56) and (59) the assertion (47) follows from the Tauber theorem mentioned above. The remaining case $np = n-1$ is proved in the same way as (59). \hfill $\Box$
\end{proof}

Theorem 3 suggests $f_Q \in \cL^2(0,q)$ if $\delta > 1/2$, which is proved by lemma 3 below. In view of this theorem a "too rapid" increase of the density $f$ near to its end--points might cause some problems. E.g. let be $h(y) = y^{-1}(\ln(ey))^{-2}$, $y \geq 1$, and $f(x) = h^{-1}(x) -1$. Then $f$ is a monotone decreasing probability density on $(0,1]$ with 
\[
\lim_{x\to 0} x^{\varepsilon-1}/f(x) = \lim_{y\to \infty} y^{-1} (h(1+y))^{\varepsilon-1} = 0\ ,
\]
\[
0 < \varepsilon < 1, \ \mbox{ and } \ f \notin \cL^p(0,1) \ \mbox{ for any } \ p > 1\ .
\]
Such cases and also "too strongly" oscillating densities are excluded for unbounded densities within the following class of functions.

\paragraph{\sc Definition : }
Let $BVC^*[p_1,\ldots,p_r, q_1,\ldots q_s] (0,1)$, \ $0 < p_1, \ldots,q_s < 1$, \ denote the class of real unbounded functions $f$ on $(0,1)$ for which there exists a function \mbox{$f_0 \in BVC [0,1]$,} as defined in (12), and any numbers $a_1, \ldots, a_r, b_1, \ldots,b_s$ with
\begin{equation}
\label{eq:60}
	f(x) - \sum^r_{i=1} a_ix^{p_i-1} - \sum^s_{j=1} b_j (1-x)^{q_j-1} = f_0(x)\ .
\end{equation}

All finite linear combinations of beta densities belong to such a $BVC^*$--class since every $B(p,q)$--density with $0 < p,q<1$ differs from a suitable linear combination of $x^{p-1}$ and $(1-x)^{q-1}$ only by a function from $BVC[0,1]$.

\begin{theorem}
\label{theorem:4}
Let $X_1,\ldots,X_n$ be i.i.d. random variables with a density $f \in BVC^*[p_1,\ldots, p_r, q_1,\ldots,q_s] (0,1)$. Then the density $f_Q$ of $Q = \sum^n_{i=1} (X_i-\overline{X})^2$ is square integrable for all $n$ with $2\delta = \min(np,n-1) > 1$, where $p = \min(p_1,\ldots,p_r,q_1,\ldots,q_s)$ and the cdf of $Q$ is given again by equation (14) in theorem 1 at least for all $n$ with $\delta > 1/4$. An integral representation for $f_Q$ is obtained by the derivative under the integral in (14) for all $n$ with $\delta > 1$.
\end{theorem}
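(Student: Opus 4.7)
The strategy follows the proof of Theorem~1, generalizing Lemma~1 so as to accommodate the integrable singularities of $f$ at $0$ and $1$. By hypothesis, $f - f_0 = \sum_i a_i x^{p_i-1} + \sum_j b_j (1-x)^{q_j-1}$ with $f_0 \in BVC[0,1]$, and since $\psi(t,z;\cdot)$ is linear in the density, $\psi(t,z;f)$ decomposes into one $BVC$-piece (already controlled by Lemma~1) and finitely many pure-power pieces $\psi(t,z;x^{p-1})$ with $0<p<1$. The reflection identity $\psi(t,-z-\sqrt{-it};f(x))=\psi(t,z;f(1-x))$ shows that the right-endpoint singularities $(1-x)^{q_j-1}$ reduce to the left-endpoint ones after a contour shift, so it suffices to bound $\psi(t,z;g_p)$ for $g_p(x)=x^{p-1}$.

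For this key piece, the substitution $w=z+\sqrt{-it}\,x$ gives the representation
\[
\psi(t,z;g_p) = (-it)^{-p/2}\int_z^{z+\sqrt{-it}} e^{-w^2}(w-z)^{p-1}\,dw,
\]
from which the $(-it)^{-p/2}$ prefactor supplies the decay rate. The integration-by-parts of Lemma~1 is unavailable because $g_p' \notin \cL^1(0,1)$; instead, contour estimates of the same type used for $\lambda(t,y)$ in the proof of Theorem~3 (cf.~(51)--(53)) yield the power analogs of (17)--(19) with $t^{-1/2}$ replaced by $t^{-p/2}$. Combined with Lemma~1 applied to the $f_0$-piece and integrated along $\gamma_k$, this gives $|\widehat{f}_Q(t_k)| = O(t_k^{-\delta})$ with $\delta = \min(np,n-1)/2$: the finite $u$-segment of $\gamma_k$ contributes $O(t_k^{1/2-np/2})$, while the Gaussian tails retain the bounded-density rate $O(t_k^{-(n-1)/2})$, matching the Tauberian calculation of Theorem~3.

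From $|\widehat{f}_Q(t_k)| = O(t_k^{-\delta})$ the three assertions follow in turn. By Parseval on $[0,q]$, $f_Q \in \cL^2$ is equivalent to $\sum t_k^{-2\delta} < \infty$, i.e., to $2\delta > 1$. The Fourier cosine series (9) for $F_Q$ converges absolutely and uniformly whenever $\sum k^{-1-\delta}$ does, i.e., for any $\delta>0$; substituting (11) for $\widehat{f}_Q(t_k)$ and interchanging sum and $u$-integration (justified by the majorant $\sum_k k^{-1}\sqrt{t_k}\,|\psi^*(t_k,u)|^n$, which upon splitting $u\in[-2,2]$ from $|u|>2$ and using the two bounds on $\psi^*$ becomes summable under $\delta>1/4$) yields formula (14). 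Differentiating (14) under the integral sign introduces an extra factor $t_k$ in each summand, so an analogous majorant argument requires the stronger $\delta>1$. The main obstacle is the second paragraph: once the $t^{-p/2}$ rate for $\psi(t,z;g_p)$ is extracted from the change-of-variable prefactor and propagated uniformly along the deformed contour $\gamma_k$, the remainder of the argument is a direct transcription of Theorem~1's scheme.
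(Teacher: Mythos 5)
Your overall architecture matches the paper's: decompose $f$ into $f_0\in BVC[0,1]$ plus pure powers, use the reflection identity to reduce right--endpoint singularities to left--endpoint ones, bound $\psi(t,z;x^{p-1})$ along the deformed contour, and then rerun the scheme of Theorem~1. But there are genuine gaps. First, your quantitative claims are internally inconsistent: you assert that the finite $u$--segment of $\gamma_k$ contributes $O(t_k^{1/2-np/2})$ --- which is indeed all that the uniform bound $|\psi^*_p|=O(t^{-p/2})$ gives after multiplying by the arclength factor $|w_k|=\sqrt{t_k}$ --- and simultaneously that $|\widehat{f}_Q(t_k)|=O(t_k^{-\delta})$ with $\delta=\min(np,n-1)/2$. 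These differ by a factor $t_k^{1/2}$; with only your segment bound, square integrability would require $np>2$, not $np>1$. Recovering the missing half power requires the $u$--dependent estimates $|\psi^*_p(t,-u)|=O(t^{-1/2}u^{p-1})$ for $p\ge 1/2$, resp.\ $O(t^{-p}u^{-p})$ for $p<1/2$, valid for $u>c/\sqrt t$ (the paper's (64b), (64c)), together with the explicit integrations (71), (72). Even then the resulting bound is \emph{not} $O(t_k^{-\delta})$ when $1/2<np\le 1$ (it is $O(t^{-(np-1/2)})$; see (65) and the remark after Lemma~3) --- this is exactly why the theorem demands $\delta>1/4$ for the uniform convergence of the series, whereas your remark that the series converges ``for any $\delta>0$'' shows you are assuming a bound that is not actually available.

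Second, you never address how the bounds for the individual pieces combine inside $\big|\sum_i c_i\psi_i\big|^n$: expanding the $n$-th power produces cross terms $\prod_i|\psi_i|^{m_i}$ along $\gamma_k$, and the square--summability of $\int_{\gamma_k}|\psi(t_k,z)|^n\,|dz|$ does not follow by simply adding the pure--power results to the $BVC$ result. The paper devotes Lemma~4 to precisely this point (H\"older's inequality on the path integrals followed by H\"older for sequences, applied iteratively to the pairs $\big(\sum_{i\le m}c_i\psi_i,\,c_{m+1}\psi_{m+1}\big)$), and invokes ``repeated use of H\"older's inequality'' again for the majorant that justifies interchanging sum and integral. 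Finally, the assertion that the prefactor $(-it)^{-p/2}$ in your change of variables ``supplies the decay rate'' is not by itself an argument: a crude estimate of $\int_z^{z+\sqrt{-it}}e^{-w^2}(w-z)^{p-1}\,dw$ gives modulus of order $t^{p/2}$ near $z=0$, exactly cancelling the prefactor, so the real content of the paper's Lemma~3 (splitting at $x=t^{-1/2}$, partial integration, the ${}_1F_1$ and parabolic--cylinder asymptotics) cannot be bypassed by the substitution alone.
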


\paragraph{\sc Remarks : }
A representation of $F_Q$ only by $\psi^*(t,u)$ as in (13) is not given here because it would be numerically less useful. However, it can be shown that e.g. the condition $n \min(p, 1-q) > 1$ with $q = \max (p_1,\ldots,p_r,q_1,\ldots,q_s)$ is sufficient for such a representation.

For the proof of theorem 4 we need some majorants for $\psi(t,z)$, $z \in \gamma_t$, $t >0$, for the special densities $f_p(x) = px^{p-1}$ and $f_{p1}(x) = f_p(1-x)$, \ $0 < p < 1$, \ and the resulting bounds for $|\widehat{f}_Q(t)|$. According to the three parts of the path $\gamma_t$, defined as in (7) with $t$ instead of $t_k$, we define 
\begin{eqnarray}
\label{eq:61}
\psi^+_p(t,y) &=& \cE \exp \left( - (y+\sqrt{-it}\ X)^2\right),\nonumber\\
\psi^-_p(t,y) &=& \cE \exp \left( - (y+\sqrt{-it}\ (1- X))^2\right), \ y > 0,\\
\psi^*_p(t,u) &=& \psi^+_p \left(t,u\sqrt{-it}\right),\  -1\leq u \leq 0\ ,\nonumber
\end{eqnarray}
with expectation referring to $f_p$.

The invariance of $Q$ under the transformation $X \to 1-X$ is reflected by the easily verified relations
\begin{eqnarray}
\label{eq:62}
\psi^+_{p1}(t,y) &=& \psi^-_p(t,y), \  \psi^-_{p1}(t,y) = \psi^+_{p}(t,y), \nonumber\\[-2ex]
&&\\[-2ex]
\psi^*_{p1}(t,-u) &=& \psi^*_p(t,-(1-u)), \  0 \leq u \leq 1\ . \nonumber
\end{eqnarray}
Hence, we have only to investigate the functions $\psi^\pm_p, \psi^*_p$, defined in (61).

\begin{lemma}
\label{lemma:3}
For the functions $\psi^\pm_p, \psi^*_p$, defined in (61), the following estimates hold for large $t$:
\begin{equation}
\label{eq:63}
	|\psi^\pm_p(t,y)| = \min (1, y^{-p} \exp(-y^2)) O(t^{-p/2}),
\end{equation}
\newcounter{appeqn}
\setcounter{appeqn}{1}
\vspace{-3ex}
\begin{eqnarray*}
\label{eq:64}
\hspace*{1.cm}\begin{array}[t]{l}
 |\psi^*_p(t,-u)| \\
(0 < u \leq 1) 
\end{array}
= 
\left\{ 
\begin{array}{l@{\hspace{3.4cm}}r} 
O(t^{-p/2}) & (\arabic{equation}\alph{appeqn})\\
\stepcounter{appeqn}
O(t^{-1/2} u^{p-1}), \ u > c/\sqrt t, \ p \geq 1/2 & \quad (\arabic{equation}\alph{appeqn})\\
\stepcounter{appeqn}
O(t^{-p} u^{-p}), \ u > c/\sqrt t, \ p < 1/2 & (\arabic{equation}\alph{appeqn})\\
\end{array}
\right.
\end{eqnarray*}\\
with a suitable constant $c > 1, t>c^2$ and $O$--constants depending only on $p$.

\setcounter{appeqn}{1}
\stepcounter{equation}

Furthermore, for large $t$ the cf $\widehat{f}_Q(t)$ of $Q$ is absolutely bounded by
\begin{equation}
\label{eq:65}
	\sqrt{\frac n\pi} \int_{\gamma_t} |\psi (t,z)|^n |dz| 
	= \left\{ \begin{array}{lr}
O(t^{-\delta}), \delta = \min(np,n-1)/2, & np\neq n-1, np>1 \\
O(t^{-(n-1)/2} \ln(t)),                  & np = n-1 \\
O(t^{-(np-1/2)},                          & 1/2 < np < 1\\
O(t^{-1/2} \ln(t))                       & np = 1
	\end{array} \right\}                  
\end{equation}
with $O$--constants depending only on $p$ and $n$.
\end{lemma}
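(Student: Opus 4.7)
My plan is to establish (63), then (64), then combine these to prove (65); the estimates for $\psi^-_p$ follow from the symmetry $X \to 1-X$ recorded in (62). The principal technical point is the uniform $O(t^{-p/2})$ decay of $\psi^+_p(t,y)$ at $y=0$ (and of $\psi^*_p(t,-u)$ at $u=0$), where modulus estimates are useless and the decay must be extracted from the oscillation of $\exp(-v^2)$ along a complex contour.

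For (63), the substitution $v = y + \sqrt{-it}\,x$ rewrites
\[
\psi^+_p(t,y) = p\,(-it)^{-p/2}\int_y^{y+\sqrt{-it}} e^{-v^2}(v-y)^{p-1}\,dv,
\]
exposing the prefactor $t^{-p/2}$. Parameterising the segment by $s\in[0,1]$ and using $|e^{-v^2}| = \exp(-y^2 - ys\sqrt{2t})$, the inner integral is dominated by $t^{p/2}\Gamma(p)\,e^{-y^2}(y\sqrt{2t})^{-p}$, which yields the $y^{-p}e^{-y^2}$ branch of (63) when $y\sqrt{t}$ is not small. For small $y\sqrt{t}$ this bound blows up, so I would extend the line integral along the same ray out to infinity and subtract the tail:
\[
\int_y^{y+\sqrt{-it}} = \int_y^{y+\infty e^{-i\pi/4}} - \int_{y+\sqrt{-it}}^{y+\infty e^{-i\pi/4}}.
\]
Completing the square on the infinite ray converts the first integral into a Fresnel-type integral bounded by a constant depending only on $p$ (equal to $\tfrac{1}{2}\Gamma(p/2)$ at $y=0$); the tail is $O(t^{(p-2)/2})$ by one integration by parts against $d\,e^{is^2}$. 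These together deliver the uniform $O(t^{-p/2})$ branch of (63).

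For (64), $\psi^*_p(t,-u) = p\int_0^1 e^{it(x-u)^2}x^{p-1}\,dx$ has a stationary point at $x=u$ and a singular endpoint at $x=0$. The uniform bound (64a) follows from the same rescale-and-subtract-tail procedure as above; at $u=0$ it reduces to $p\,t^{-p/2}\int_0^{\sqrt t} e^{is^2}s^{p-1}\,ds = O(t^{-p/2})$. For $u > c/\sqrt{t}$, I would rescale $x = u\xi$, placing the integral in the form
\[
u^p \int_0^{1/u} e^{iT(\xi-1)^2}\xi^{p-1}\,d\xi, \qquad T = tu^2 \geq c^2.
\]
Stationary-phase analysis at $\xi=1$ contributes $O(u^p T^{-1/2}) = O(t^{-1/2}u^{p-1})$, while the singular endpoint at $\xi=0$, estimated via the incomplete-Gamma representation $\int_0^\varepsilon e^{-2iT\xi}\xi^{p-1}\,d\xi \sim \Gamma(p)(2iT)^{-p}$, contributes $O(u^p T^{-p}) = O(t^{-p}u^{-p})$. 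The stationary-phase term dominates when $p \geq 1/2$, giving (64b), and the endpoint term dominates when $p < 1/2$, giving (64c).

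To prove (65), integrate $|\psi|^n$ along the three pieces of $\gamma_t$. On the outer rays $\{y>0\}$ and $\{y-\sqrt{-it}\mid y<0\}$ (the latter becoming an integral of $|\psi^-_p|^n$ over $y>0$ by (62)), the $\min$-structure in (63) makes the integrand bounded by $C^n t^{-np/2}\min(1,y^{-np}e^{-ny^2})$, which is integrable uniformly in $t$, yielding $O(t^{-np/2})$. On the middle piece $z = u\sqrt{-it}$, $|dz| = \sqrt{t}\,du$, I would split $(0,1]$ at $u = c/\sqrt{t}$: the inner part yields $O(\sqrt{t}\cdot t^{-np/2}\cdot t^{-1/2}) = O(t^{-np/2})$ from (64a), while the outer part contributes $\sqrt{t}\cdot t^{-n/2}\int_{c/\sqrt{t}}^1 u^{n(p-1)}\,du$ via (64b) when $p\ge 1/2$ and $\sqrt{t}\cdot t^{-np}\int_{c/\sqrt{t}}^1 u^{-np}\,du$ via (64c) when $p<1/2$. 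The four cases of (65) correspond to whether the exponent $n(p-1)+1$ (respectively $1-np$) is positive, zero, or negative: positive values give a bounded integral, negative values make the behaviour at $u = c/\sqrt{t}$ dominant, and zero produces the $\ln t$ factor from $\int u^{-1}du$. Direct arithmetic matches the four stated bounds, with the logarithm appearing precisely at $np = n-1$ and $np = 1$.
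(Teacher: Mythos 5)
Your handling of $\psi^+_p$, of (64), and of the assembly of (65) from the three pieces of $\gamma_t$ is essentially sound, but there is one genuine gap: the bound (63) for $\psi^-_p$ does \emph{not} ``follow from the symmetry (62)''. That symmetry says $\psi^-_p(t,y)=\psi^+_{p1}(t,y)$, i.e.\ it converts $\psi^-_p$ into the $\psi^+$--function of the \emph{reflected} density $f_{p1}(x)=p(1-x)^{p-1}$, whose singularity sits at $x=1$; it does not reduce it to the estimate you proved for $\psi^+_p$. The two cases are analytically different. For $\psi^+_p$ the damping factor $\exp(-xy\sqrt{2t})$ is weakest exactly where the density is singular ($x=0$), and the modulus bound produces an incomplete gamma function. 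For $\psi^-_p$ the same modulus bound gives
$p\int_0^1\exp\bigl(-y^2-y\sqrt{2t}+xy\sqrt{2t}\bigr)x^{p-1}\,dx=\exp\bigl(-y^2-y\sqrt{2t}\bigr)\,{}_1F_1\bigl(p,p+1,y\sqrt{2t}\bigr)$,
and one needs the Kummer asymptotics ${}_1F_1(p,p+1,w)\sim p\,w^{-1}e^{w}$ to see that this is $O\bigl(y^{-p}e^{-y^2}t^{-p/2}\bigr)$ for $y\sqrt t\geq c$ (using $w^{-1}\leq c^{p-1}w^{-p}$), plus a separate partial-integration-and-splitting argument for the uniform $O(t^{-p/2})$ branch near $y=0$. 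This occupies roughly a third of the paper's proof, and it is precisely what controls the left ray $\{y-\sqrt{-it}\mid y<0\}$ of $\gamma_t$ in your own derivation of (65), so it cannot be dispatched by the reflection identity.

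Two smaller remarks. First, your stationary-phase treatment of $u^p\int_0^{1/u}e^{iT(\xi-1)^2}\xi^{p-1}\,d\xi$, $T=tu^2$, separating the contribution of the stationary point $\xi=1$ from that of the singular endpoint $\xi=0$, is a genuinely different and arguably cleaner route to (64b)--(64c) than the paper's, which evaluates $I_1$ exactly in terms of parabolic cylinder functions $U(-p-1/2,\pm(1+i)u\sqrt t)$ and justifies the resulting bound partly ``empirically by plotting''; your comparison $T^{\,p-1/2}\gtrless 1$ correctly identifies which term dominates (you should still dispose of the non-stationary endpoint $\xi=1/u$, which contributes $O(1/(t(1-u)))$ and is dominated by both stated bounds). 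Second, your claim that $\int_0^\infty e^{-(y+se^{-i\pi/4})^2}s^{p-1}\,ds$ is bounded uniformly in $y\geq0$ by a constant depending only on $p$ is true but needs an argument (absolute bound on $s\leq1$, one integration by parts against the phase $s^2+\sqrt2\,ys$ on $s>1$), since the crude modulus bound $\Gamma(p)(\sqrt2\,y)^{-p}$ diverges as $y\to0$; the paper avoids this by splitting the original integral at $x=t^{-1/2}$ instead of extending the ray to infinity.
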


\paragraph{\sc Remark : } Obviously, the exponent within the $O$--term for $np<1$ does not give the correct order of magnitude for $\widehat{f}_Q$. This is a consequence of the coarse estimation of $|\int^1_0 (\psi^*)^n du|$ by $ \int^1_0 |\psi^*|^n du$ given below. On the other hand these estimates are at least sufficient to ensure the square integrability of $f_Q$ for $np > 1$ and this is considered as a minimal requisite for a reasonable rate of convergence in the representation (14) for $F_Q$.

\paragraph{\sc Proof : } With $t > 0$, $y >0$ and
\begin{eqnarray}
\label{eq:66}
|\exp (-(y+x \sqrt{-it})^2)| &= & |\exp (-(y + x \sqrt{t/2} - ix\sqrt{t/2})^2)|\nonumber\\
&=& \exp (-y^2 - xy \sqrt{2t})
\end{eqnarray}
we get
\begin{eqnarray}
\label{eq:67}
|\psi^+_p(t,y)| &\leq & \exp (-y^2) p\gamma\big(p,y \sqrt{2t}\big) \big(y \sqrt{2t}\big)^{-p}\nonumber\\
&<& 2^{-p/2} \Gamma(p+1)t^{-p/2} y^{-p} \exp(-y^2).
\end{eqnarray}
To obtain an estimate for small $y$ and $t>1$ the integral $\psi^+_p(t,y)$ is decomposed into two integrals $I_1$, $ I_2$ over the intervals $(0,t^{-1/2}]$, $(t^{-1/2}, 1]$ respectively. Obviously we have
\[
|I_1| \leq t^{-p/2}.
\]
For $I_2$ we find after partial integration
\begin{eqnarray*}
I_2 &=& \frac 12 \ p \sqrt \pi \ \sqrt{i/t}\left(\erf \left(y + x\sqrt{-it}\right) - \erf(y)\right)x^{p-1}\big|^1_{x=t^{-1/2}} \\
&& - \frac 12 \ p \sqrt \pi \sqrt{i/t} \int^1_{1/\sqrt t} \left(\erf \left(y + x\sqrt{-it}\right) - \erf(y)\right)(p-1) x^{p-2} \ dx\ .
\end{eqnarray*}
With the number $\max \{ \erf (z) \ \big|\  |\Re e(z)| \geq |\Im m(z)|\}$ and suitable constants $C_i, i=1,2,\ldots,$ only depending on $p$, it follows
\[ 
|I_2| \leq C_1 \ t^{-1/2} + C_2\  t^{-p/2},
\]
and therefore
\[
| \psi^+_p (t,y)| \leq C_3 t^{-p/2}, \ t > 1\ .
\]
Together with (67) this yields (63) for $\psi^+_p$.

To find a majorant for $\psi^-_p$ we proceed as before: 
\begin{eqnarray*}
|\psi^-_p(t,y)| &\leq & p \int^1_0 \exp \big( -y^2 - y \sqrt{2t} + xy\sqrt{2t}\big)x^{p-1} \ dx\\
&=&  \exp \big( -y^2 - y\sqrt{2t}\big) {}_1F_1 \big(p,p+1,y\sqrt{2t}\big)\ .
\end{eqnarray*}
With (A.S. 13.5.1) and $y\sqrt t \geq c$, where $c$ is a suitable constant, it follows
\begin{eqnarray}
\label{eq:68}
|\psi^-_p(t,y)| &\leq & C_1 \ \exp (-y^2)\left( y \sqrt{2t}\right)^{-p} \left( \Gamma (p+1)\exp\left( -y \sqrt{2t}\right)+ p\left( y \sqrt{2t}\right)^{p-1}\right) \nonumber\\
&\leq & C_2 \ y^{-p} \exp (-y^2) t^{-p/2}.
\end{eqnarray}\\
On the other hand we obtain after partial integration\\
\begin{small}
\begin{eqnarray*}
\lefteqn{\psi^-_p (t,y) =  - \frac 12 \ p \sqrt{i \pi/t} \left( \erf(y) - \erf\left(y + \sqrt{-it}\right)\right) }\\
&&- \frac 12 \ p\sqrt\pi \int^1_0 \left( - \sqrt{-it}\right)^{-1} \left(\erf \left( y + \sqrt{-it}(1-x)\right) - \erf \left( y + \sqrt{-it}\right)\right) (p-1) x^{p-2} \ dx .
\end{eqnarray*}
\end{small}

The last integral is splitted again into $I_1 + I_2$ over $x \leq t^{-1/2}$ and $x > t^{-1/2}$, $t>1$, and we find
\begin{small}
\begin{eqnarray*}
|I_1| &\leq & \int^{1/\sqrt t}_0 \Big|(-x\sqrt{-it})^{-1}\left( \erf(y+\sqrt{-it}(1-x))-\erf(y+\sqrt{-it})\right)\Big|(1-p )x^{p-1}\ dx\\
&\leq& C_3 \ t^{-p/2},\\
|I_2| &\leq & C_4 \ t^{-1/2} \int^1_{1/\sqrt t} (1-p)x^{p-2} \ dx \ \leq \ C_4 \ t^{-p/2} ,
\end{eqnarray*}
\end{small}
and therefore 
\[
|\psi^-_p(t,y)| \leq C_5 \ t^{-p/2}, \ t > 1.
\]
Together with (68) this implies (63). By the same way (partial integration and splitting the integral) the assertion (64a) is obtained.

Now a bound is given for $\psi^*_p$ containing $u$. After the substitution $x \to x/\sqrt t$ we get
\begin{equation}
\label{eq:69}
	\psi^*_p (t,-u) = p  \exp (itu^2) t^{-p/2}(I_1 - I_2)
\end{equation}
with
\[
I_1 = \int^\infty_0 \exp(-2ixu\sqrt t) \exp(ix^2)x^{p-1}\ dx
\]
and $I_2$ with the same integrand over $(\sqrt t,  \infty)$. 
The first integral is evaluated by a combination of the parabolic cylinder functions $U(-p-1/2, \ \pm(1+i)u\sqrt t)$, \mbox{$U(-p-3/2, \ \pm(1+i)u\sqrt t)$.}
By means of (A.S. 19.12.1), (A.S. 19.12.3) and (A.S. 13.5.1) (or only empirically by plotting) it can be shown that 
\begin{equation}
\label{eq:70}
	|I_1(-2u\sqrt t)| \leq C_1 (u\sqrt t)^{-\min (p,1-p)}, \ u\sqrt t \geq c\ ,
\end{equation}
with a suitable constant $c > 1$. By partial integration of $\exp(itu^2) I_2$ we obtain
\begin{eqnarray*}
|I_2 |&\leq& \int^\infty_{\sqrt t} \Big| \erf\left(\sqrt{-i}(x-u\sqrt t)\right) - \erf\left(\sqrt{-it}(1-u)\right)\Big|(1-p)x^{p-2}\ dx\\
&\leq & C_2\  t^{(p-1)/2},
\end{eqnarray*}
and together with (69) and (70) it follows
\[
|\psi^*_p(t,-u)| \leq C_2\ t^{-1/2} + C_1 \ t^{-p/2}(u\sqrt t)^{-\min(p,1-p)},
\]
which implies (64b) and (64c).

Finally, the cf $\widehat{f}_Q$ is absolutely bounded by
\[
\sqrt{n/\pi}\left(\sqrt t \int^1_0 |\psi^*_p (t,-u)|^ndu + \int^\infty_0 (|\psi^-_p(t,y)|^n + |\psi^+_p(t,y)|^n\right)dy.
\]
Due to (63) and (64a) the second integral and $\sqrt t \int^{c/\sqrt t}_0 |\psi^*_p|^n du$ are $O(t^{-pn/2})$--terms. \linebreak If $p \geq 1/2$ then\\
\begin{eqnarray}
\label{eq:71}
\sqrt t \int^1_{c/\sqrt t} |\psi^*_p|^n \ du &=& O \Big( t^{-(n-1)/2} \int^1_{c/\sqrt t} u^{(p-1)n}\ du \Big)\nonumber\\
&=& \begin{cases}
O\left( t^{-\min(np,n-1)/2}\right)\ , & np \neq n-1\\
O\left( t^{-(n-1)/2}\ln(t) \right)\ , & np = n-1\ . 
\end{cases}
\end{eqnarray}\\
If $p < 1/2$ then $pn < n-1$ and\\
\begin{eqnarray}
\label{eq:72}
\sqrt t \int^1_{c/\sqrt t} |\psi^*_p|^n \ du &=& O \Big( t^{1/2-np} \int^1_{c/\sqrt t} u^{-np}\ du \Big)\nonumber\\
&=& \begin{cases}
O \left( t^{-np/2}\right) + O(t^{-(np-1/2)}\ , & np \neq 1\\
O \left( t^{-1/2} \ln(t)\right) \ , & np = 1\ .
\end{cases}
\end{eqnarray}\\
If $np > 1$ then $\min (np/2, np-1/2) = np/2$ and (65) follows from (71) and (72). \hfill $\Box$ \\

Additionally, for the proof of theorem 4 the following lemma is applied.

\begin{lemma}
\label{lemma:4}
Let be $f_i \in \cL^1(0,1)$, $i = 1,2,$
\[
\psi_i(t,z) = \int^1_0 \exp\Big( -\big(z+x \sqrt{-it}\big)^2\Big) f_i(x)\ dx, \ t \in \bR, \ z \in \bC\ ,
\]
$\{\gamma_k |k \in \bN\}$ the family of paths defined in (7),
\[
\phi_i(t_k) := \int_{\gamma_k} |\psi_i(t_k,z)|^n\ |dz| \qquad \mathrm{and} \qquad \phi(t_k) := \int_{\gamma_k} (|c_1 \psi_1| + |c_2 \psi_2|)^n \ |dz|
\]
with any numbers $c_1,c_2$. Then the assumption $\sum_k \phi^2_i(t_k) < \infty$, $i = 1,2$, entails $ \sum_k \phi^2(t_k) < \infty$\ .
\end{lemma}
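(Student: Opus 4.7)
The plan is to reduce the claim to a pointwise (in $k$) inequality bounding $\phi(t_k)$ by an expression in $\phi_1(t_k)$ and $\phi_2(t_k)$ alone, and then square and sum.

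First I would view $\phi_i(t_k)^{1/n}$ as the $\cL^n$-norm of $|\psi_i(t_k,\cdot)|$ on the path $\gamma_k$ (with arc length measure). Applying Minkowski's inequality to the nonnegative functions $|c_1\psi_1|$ and $|c_2\psi_2|$ on $\gamma_k$ yields the key pointwise estimate
\[
\phi(t_k)^{1/n} \;\le\; |c_1|\,\phi_1(t_k)^{1/n} + |c_2|\,\phi_2(t_k)^{1/n}.
\]
Next I would raise this to the $n$-th power and use the elementary inequality $(a+b)^n\le 2^{n-1}(a^n+b^n)$ for $a,b\ge 0$ to obtain
\[
\phi(t_k) \;\le\; 2^{n-1}\bigl(|c_1|^n\phi_1(t_k) + |c_2|^n\phi_2(t_k)\bigr),
\]
and then apply $(a+b)^2\le 2(a^2+b^2)$ once more to get
\[
\phi(t_k)^2 \;\le\; 2^{2n-1}\bigl(|c_1|^{2n}\phi_1(t_k)^2 + |c_2|^{2n}\phi_2(t_k)^2\bigr).
\]

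Summing this last inequality over $k\in\bN$ immediately gives $\sum_k\phi^2(t_k)<\infty$ from the two hypotheses $\sum_k\phi_i^2(t_k)<\infty$. The only step requiring any care is the application of Minkowski's inequality, which is legitimate because $|\psi_i(t_k,\cdot)|$ is measurable on the rectifiable path $\gamma_k$ and $\phi_i(t_k)=\||\psi_i(t_k,\cdot)|\|_{\cL^n(\gamma_k)}^n$; no convergence or integrability issue arises, since the hypotheses already guarantee $\phi_i(t_k)<\infty$ for each $k$. I do not expect any serious obstacle — the proof is essentially a one-line use of the triangle inequality in $\cL^n$ followed by a convexity bookkeeping estimate.
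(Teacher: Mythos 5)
Your proof is correct, and it takes a route that is recognizably different from the paper's. The paper expands $(|c_1\psi_1|+|c_2\psi_2|)^n$ by the binomial theorem, reduces the claim to the cross terms $\phi_{mn}(t_k)=\int_{\gamma_k}|\psi_1|^m|\psi_2|^{n-m}\,|dz|$, bounds each by H\"older's inequality on the path, $\phi_{mn}(t_k)\le(\phi_1(t_k))^{m/n}(\phi_2(t_k))^{(n-m)/n}$, and then applies H\"older's inequality a second time, now for sequences, to conclude $\sum_k\phi_{mn}^2(t_k)<\infty$. You instead use the triangle inequality in $\cL^n(\gamma_k)$ (Minkowski) to get the single pointwise bound $\phi(t_k)^{1/n}\le|c_1|\phi_1(t_k)^{1/n}+|c_2|\phi_2(t_k)^{1/n}$, and then convexity to obtain $\phi(t_k)\le 2^{n-1}\bigl(|c_1|^n\phi_1(t_k)+|c_2|^n\phi_2(t_k)\bigr)$, after which the summation is immediate. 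Your version buys a genuinely pointwise domination of $\phi(t_k)$ by a linear combination of $\phi_1(t_k)$ and $\phi_2(t_k)$, which is slightly stronger than what the paper records and dispenses with the sequence-H\"older step; the paper's version gives the individual cross-term estimates, which is the form actually reused qualitatively elsewhere (the ``repeated use of H\"older's inequality'' in the proof of Theorem~4). Both arguments are sound; your handling of the measurability and finiteness issues on the unbounded rectifiable paths $\gamma_k$ is adequate, since Minkowski's inequality holds on infinite measure spaces and the hypotheses force each $\phi_i(t_k)<\infty$.
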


\paragraph{\sc Proof : } It has only to be shown  that $\sum_k \phi^2_{mn}(t_k) < \infty$ for the numbers 
\[
\phi_{mn}(t_k) = \int_{\gamma_k} |\psi_1|^m |\psi_2|^{n-m} |dz|, \ m=1,\ldots,n-1\ .
\]
From H\"older's inequality it follows
\[
\phi_{mn}(t_k) \leq (\phi_1(t_k))^{m/n} (\phi_2(t_k))^{(n-m)/n}\ ,
\]
and again with H\"older's inequality for sequences we get
\[
\sum_k \phi^2_{mn}(t_k) \leq \left( \sum_k \phi^2_1(t_k)\right)^{m/n} \left( \sum_k \phi^2_2(t_k)\right)^{(n-m)/n} < \infty\ . \hfill  \Box
\] 

\paragraph{\sc Proof of theorem 4 : } ~\newline
For a probability density $f \in BVC^* [p_1,\ldots,p_r, q_1,\ldots, q_s] (0,1)$ the corresponding function $\psi (t,z) = \cE \exp \left( - \left( z + x \sqrt{-it}\right)^2\right)$ is a linear combination
\[
\psi_0 + \sum^r_{i=1} a_i\psi_i + \sum^s_{j=1} b_j \psi_{r+j} = \sum^{r+s}_{i=0} c_i\psi_i
\]
with $\psi_0 $ belonging to $f_0 \in BVC [0,1]$.

The pairs $\left( \sum^m_{i=0} c_i\psi_i, c_{m+1} \psi_{m+1}\right)$, $m=0,\ldots,r+s-1$ satisfy the assumptions of lemma 4 if $2 \delta = \min(np, n-1) > 1$ with $p = \min(p_1,\ldots,p_r,q_1,\ldots,q_s)$, and we obtain
\[
\sum_k \phi^2(t_k) < \infty
\]
for the numbers
\[
\phi(t_k) = \int_{\gamma_k} |\psi(t_k,z)|^n \ |dz|
\]
and a fortiori
\[
\sum_k \left|\widehat{f}_Q (t_k)\right|^2 < \infty,
\]
which proves the square integrability of $f_Q$ if $\delta > 1/2$. 

Furthermore, because of (65) in lemma 3, the series
\[
(2/\pi) \sum^\infty_{k=1} \Im m\left( \widehat{f}_Q (t_k)\right) (1-\cos(t_kx))/k
\]
is absolutely and uniformly convergent at least for all $n$ with $\delta > 1/4$. The change of summation and integration is justified by a majorant for the integrand derived from (63) and (64) in lemma 3 and repeated use of H\"older's inequality. For the same reason the density $f_Q$ is obtained by the derivative under the integral for all $n$ with $\delta > 1$. \hfill $\Box$ \\

The last theorem provides only a coarse estimation for the order of magnitude of $F_Q(x)$ for small $x$ under rather general assumptions for the parent density $f$.

\begin{theorem}
\label{theorem:5}
Let $f$ be any probability density on $(0,1)$,
\[
r := \sup \{p | f \in \cL^p(0,1)\} < \infty \quad \mbox{and} \quad n (1-1/r) > 1\ .
\]
Then
\[
F_Q (x) = o(x^\delta), \ x \to 0,
\]
at least for all $\delta < \delta_r := (n(1-1/r)-1)/2$.
\end{theorem}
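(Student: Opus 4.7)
The plan is to bypass the Fourier--series machinery of the earlier sections and bound $F_Q(x)$ directly via the sample range, exploiting only the inequality (44) and Hölder's inequality. Starting from
\[
F_Q(x) \le P\{R \le \sqrt{2x}\},
\]
it suffices to control $P\{R \le \varepsilon\}$ as $\varepsilon \to 0^+$.

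The first step is the standard order--statistic identity
\[
P\{R \le \varepsilon\} = n \int_0^1 f(x)\,\bigl[F(\min(1,x+\varepsilon)) - F(x)\bigr]^{n-1}\, dx,
\]
obtained by conditioning on $X_{(1)}=x$ and requiring the remaining $n-1$ observations to lie in $(x,x+\varepsilon)$. Next, for each fixed $p \in (1,r)$, Hölder's inequality applied to the increment gives
\[
F(\min(1,x+\varepsilon))-F(x) = \int_x^{\min(1,x+\varepsilon)} f(y)\,dy \le \|f\|_p\,\varepsilon^{1-1/p}.
\]
Substituting this bound and using $\int_0^1 f = 1$ yields
\[
P\{R \le \varepsilon\} \le n\,\|f\|_p^{n-1}\,\varepsilon^{(n-1)(1-1/p)},
\]
which, combined with the range inequality, gives $F_Q(x) = O\bigl(x^{(n-1)(1-1/p)/2}\bigr)$ for every fixed $p \in (1,r)$.

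To upgrade this family of $O$--bounds to the $o(x^\delta)$ claim, I would verify the elementary arithmetic
\[
\tfrac{1}{2}(n-1)(1-1/r) - \delta_r = \tfrac{1}{2r} > 0,
\]
so that $\delta_r < \tfrac{1}{2}(n-1)(1-1/r)$. Consequently, for any $\delta < \delta_r$ one may choose $p \in (1,r)$ close enough to $r$ that the Hölder exponent $(n-1)(1-1/p)/2$ strictly exceeds $\delta$; hence $F_Q(x) = O(x^{(n-1)(1-1/p)/2}) = o(x^\delta)$, as required.

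The only real obstacle is the interplay of exponents: one must notice that the hypothesis $n(1-1/r) > 1$ (equivalently $r > n/(n-1) > 1$) is precisely what guarantees both $\delta_r > 0$ and the existence of an admissible $p \in (1,r)$, and that although $\|f\|_p \to \infty$ as $p \to r^-$, this only inflates the $O$--constant while leaving the exponent untouched, which is exactly why the pointwise $O$--bounds at each fixed $p < r$ combine into an $o$--bound at the critical level.
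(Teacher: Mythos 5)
Your proof is correct, but it follows a genuinely different route from the paper's. The paper works on the Laplace transform side: it applies H\"older's inequality inside $\lambda(t,y)=\cE\exp(-(y+x\sqrt t)^2)$ with exponents $p<r$ and $q=p/(p-1)$, integrates the resulting $\erf$--expression over $y$ to get $f^*_Q(t)=O(t^{-\delta_p})$ with $\delta_p=(n(1-1/p)-1)/2$, converts this to $F_Q(x)=O(x^{\delta_p})$, and then lets $p\uparrow r$. You instead stay entirely on the probability side, combining the range inequality (44) with the order--statistic formula $P\{R\le\varepsilon\}=n\int f(x)[F(x+\varepsilon)-F(x)]^{n-1}dx$ (the same device the paper uses in Lemma~2 for the special density $f_p$) and applying H\"older only to the increment $F(x+\varepsilon)-F(x)\le\|f\|_p\,\varepsilon^{1-1/p}$. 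Your arithmetic checks out: the exponent you obtain, $(n-1)(1-1/p)/2=\delta_p+1/(2p)$, is strictly larger than the paper's $\delta_p$, so your argument is both more elementary (no transforms, no Chernoff/Tauberian step) and actually yields a slightly stronger conclusion, namely $F_Q(x)=o(x^\delta)$ for all $\delta<(n-1)(1-1/r)/2=\delta_r+1/(2r)$, of which the stated theorem is a special case. What the paper's transform route buys in exchange is continuity with the rest of its machinery: the bound on $f^*_Q$ (equivalently $\widehat f_Q$) is the kind of estimate that elsewhere drives the square--integrability of $f_Q$ and the convergence of the Fourier representations, whereas the range bound controls only the left tail of $F_Q$ and gives no information about the transform. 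The one point worth making explicit in your write--up is why $f\in\cL^p(0,1)$ for every $p<r$ (it follows from the definition of $r$ as a supremum together with the finiteness of the measure of $(0,1)$), but this is routine.
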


\paragraph{\sc Proof : } If $n(1-1/r) > 1$ then there exists a $p < r$ with $n(1-1/p) >1$. Then with $q = p/(p-1)$ and H\"older's inequality we obtain for
\[
\lambda(t,y) = \cE \exp (-(y + x\sqrt t)^2)
\]
the bound 
\[
\lambda(t,y) = \|f\|_p \left( \int^1_0 \exp(-q(y + x\sqrt t)^2)dx\right)^{1/q}.
\]
It follows for the LT $f^*_Q$ of $Q$\\
\begin{eqnarray*}
\lefteqn{f^*_Q(t) = \sqrt{n/\pi} \int^\infty_{-\infty} (\lambda(t,y))^n \ dy }\\
&\leq&\sqrt{n/\pi}\ \|f\|_p^n \left(\frac12 \sqrt \pi\right)^{n/q} \int^\infty_{-\infty} \left(\left(\erf\left( \sqrt q( y + \sqrt t)\right) - \erf\left( \sqrt q \ y\right)\right)\big/ \sqrt{qt}\right)^{n/q}dy\\
&=& O(t^{-\delta_p}), \ t \to \infty, \ \delta_p = (n/q - 1)/2 = (n(1-1/p)-1)/2
\end{eqnarray*}\\
with an $O$--constant depending on $p,n$ and $f$. Therefore
\[
F_Q(x) = O(x^{\delta_p}), \ x \to 0\ ,
\]
and consequently
\[
F_Q(x) = o(x^{\delta_p}), \ x \to 0, \quad \mbox{for all} \ p \ \mbox{with} \ \delta_p < \delta_r\ . \hfill \Box
\]

Unfortunately, without any further assumptions we cannot conclude from theorem 5 to $f_Q \in \cL^2$ for $n(1-1/r) > 2$.  Good estimates for $|\widehat{f}_Q(t)|$ are not easily found from the only assumption $f \in \cL^p(0,1)$, $p < r$, and it seems to be difficult to determine the lowest sample size $n$ for which e.g. $f_Q \in \cL^2$. This was the reason for the introduction of the classes $BVC^* [p_1,\ldots,p_r, q_1,\ldots,q_s](0,1)$. 

As an example we look again at the density $f_\alpha(x) = \alpha x^{\alpha-1}$, $0<\alpha<1$, belonging to $\cL^p(0,1)$ for all $p<1/(1-\alpha)$. For $1<\alpha n < n-1$
\[
F_Q(x) = O(x^{\alpha n/2}), \ x \to 0\ ,
\]
 follows from (46) or theorem 3. Theorem 5 yields only
 \[
 F_Q(x) = o(x^\delta), \ \delta < (\alpha n -1)/2.
 \]
 This discrepany is caused by the weak assumptions for $f$ in theorem 5 and by the coarse estimation with H\"older's inequality.

\references


Abramowitz, M. and Stegun, I. (1968) \textit{Handbook of Mathematical Functions}, Dover Publications Inc., New York.

Feller, W. (1971) \textit{An Introduction to Probability Theory and Its Applications}, Vol II, 2nd ed. John~Wiley \& Sons, New York. 

Mudholkar, G.S. and Trivedi, M.C. (1981) A Gaussian Approximation to the Distribution of the Sample Variance for Nonnormal Populations, \textit{Journal of the American Statistical Association} \textbf{76}, 479--485.

Royen, T. (2007a) Exact Distribution of the Sample Variance from a Gamma Parent Distribution, \mbox{arXiv:0704.0539} $\left[\textrm{math.ST}\right]$

Royen, T. (2007b) On the Laplace Transform of Some Quadratic Forms and the Exact Distribution of the 
Sample Variance from a Gamma or Uniform Parent Distribution, \mbox{arXiv:0710.5749} $\left[\textrm{math.ST}\right]$

\end{document}